\DeclarePairedDelimiter{\ceil}{\lceil}{\rceil}
\numberwithin{equation}{section}
\newtheorem{theorem}{Theorem}[section]
\newtheorem{lemma}[theorem]{Lemma}
\newtheorem{corollary}[theorem]{Corollary}
\newtheorem{proposition}[theorem]{Proposition}
\newtheorem{definition}[theorem]{Definition}
\newtheorem{remark}[theorem]{Remark}
\newcommand{\abs}[1]{\lvert #1 \rvert}
\author{Alastair N. Fletcher and Allyson M. Hahn}
\title{Geometric Function Theory on Uniformly Quasiconformally Homogeneous Domains}
\begin{document}
\begin{abstract}
Uniformly quasiconformally homogeneous domains in $\mathbb{R}^n$ carry a transitive collection of $K$-quasiconformal maps for a fixed $K\geq 1.$ In this paper, we study two questions in this setting. The first is to show that quasiconformality and quasisymmetry with respect to the quasihyperbolic metric are equivalent. The second is to study normal quasiregular maps from such a domain into $S^n$ or $\mathbb{R}^n$ and show they enjoy geometric properties such as a uniform H\"{o}lder condition.
\end{abstract}
\maketitle

\section{Introduction}
\subsection{Uniformly Quasiconformally Homogeneous Domains}In complex analysis, geometric function theory provides insight as to the geometric properties that holomorphic functions enjoy. If a holomorphic function maps between two simply connected domains in $\mathbb{C},$ by the Riemann Mapping Theorem, we can lift via conformal bijections to work in the unit disk. In particular, we are able to take advantage of hyperbolic geometry and the fact that holomorphic functions do not increase hyperbolic distance. A common model is the Poincar\'{e} Disk, where we view the unit disk, denoted $\mathbb{D},$ as the hyperbolic plane and equip it with the hyperbolic metric. In this setting, we gain the important property that the group of automorphisms of $\mathbb{D}$ form a conformal, transitive collection of hyperbolic isometries. These characteristics and more are outlined by Beardon and Minda in \cite{beardon2011hyperbolic}.  

Since quasiregular mappings are a natural generalization of holomorphic functions to higher real dimensions, we can generalize geometric function theory to the setting of $\mathbb{R}^n,$ for $n\geq 2.$ Hyperbolic geometry generalizes nicely, as we can use the $n$-dimensional unit ball, denoted $\mathbb{B}^n,$ equipped with the hyperbolic metric as our model. With this model, we obtain the property that the automorphisms of $\mathbb{B}^n$ form a conformal, transitive collection of hyperbolic isometries. However, without the power of the Riemann Mapping Theorem in higher dimensions, we are limited to maps whose domain and codomain are contained in $\mathbb{B}^n$ or half-spaces $\mathbb{H}^n.$ Generalizing this property to other domains in $\mathbb{R}^n$ equates to defining an arbitrary subdomain of $\mathbb{R}^n$ which has a transitive collection of mappings with a global geometric property in some ``hyperbolic-like" metric.

Towards that end, in the highly influential paper \cite{gehring1976quasiconformally}, Gehring and Palka introduced the following definition:

\begin{definition}
    Let $X\subset \mathbb{R}^n$ be a domain and let $K\geq 1.$ We call $X$ a \textit{uniformly $K$-quasiconformally homogeneous} domain, or \textit{uniformly $K$-QCH domain}, if there exists a collection $G$ of $K$-quasiconformal mappings which act transitively on $X.$ In other words, for any $x,y\in X$ there exists $g\in G$ such that $g(x) = y.$
\end{definition}

Note that $G$ above is not guaranteed to be a group, just a collection. Liouville's Theorem tells us the class of M\"{o}bius maps are the only conformal maps in $\mathbb{R}^n$ for $n\geq 3,$ so requiring quasiconformal maps as our transitive collection is the appropriate generalization. It was shown in \cite[ Lemma $3.2$]{gehring1976quasiconformally} that every proper domain admits a collection of quasiconformal maps which act transitively, but such a collection is not guaranteed to have a uniform bound on the maximal dilatations. Requiring all maps in the collection to be $K$-quasiconformal for $K\geq 1,$ as in the definition above, upgrades our domain, allowing for more control over geometric properties. 
 
Obviously $\mathbb{B}^n$ is a uniformly $1$-QCH domain, as $\mathbb{B}^n$ carries the transitive group of M\"{o}bius self maps, but let us discuss some non-trivial examples of uniformly QCH domains. Gehring and Palka proved that there exists a uniformly $K$-QCH domain whose complement is a Cantor set \cite[Example $4.4$]{gehring1976quasiconformally}. They also have shown that for each $n\geq 2$ there is a domain $D\subset \mathbb{R}^n$ which is uniformly quasiconformally homogeneous such that the complement of $D$ has at least two non degenerate components \cite[Example $4.6$]{gehring1976quasiconformally}. The authors of \cite{bonfert2005quasiconformal} investigate the properties hyperbolic manifolds require in order to be quasiconformally homogeneous domains, proving that if $n\geq 3,$ a hyperbolic $n$-manifold
is uniformly quasiconformally homogeneous if and only if it is a regular cover of a closed
hyperbolic orbifold. This work is continued in \cite{bonfert2007quasiconformal}, where it is proven that any closed hyperbolic surface admitting a conformal automorphism with ``many" fixed points is uniformly quasiconformally homogeneous, with constant uniformly bounded away from 1. Other than acquiring examples of uniformly QCH domains, two other lucrative research areas in this field are determining bounds for $K$ and proving various results for domains that are assumed to be uniformly $K$-QCH (\cite{bonfert2010ambient}, \cite{gehring1976quasiconformally}, and \cite{gong2009aspects}).

In addition to defining uniformly QCH domains, Gehring and Palka defined the quasihyperbolic metric in \cite{gehring1976quasiconformally}, which generalizes properties of the hyperbolic metric but is for a proper subdomain $X$ of $\mathbb{R}^n$ and is denoted $k_X.$ This is a conformal metric with density $\frac{1}{d(X,\partial X)}.$ Many ``hyperbolic-like" properties are preserved by the quasihyperbolic metric \cite{hariri2020conformally}, but we are not guaranteed $X$ has a large collection of conformal, transitive, quasihyperbolic isometries. Requiring $X$ to also be a uniformly $K$-QCH domain gives us a domain with a large collection of transitive,  $K$-quasiconformal maps. Moreover, the $K$-quasiconformal maps which arise from the domain have a global geometric property on $X$ \cite{gehring1979uniform}. Thus, uniformly QCH domains equipped with the quasihyperbolic metric correctly generalize $\mathbb{B}^n$ equipped with the hyperbolic metric and the transitive collection of conformal, hyperbolic isometries that arise from it.

Gehring and Palka's original intention for defining uniformly QCH domains and equipping them with their quasihyperbolic metric was in hopes of gaining a new characterization for domains which are quasiconformally equivalent to the unit ball. In other words, they desired to find domains for which we could have a truly generalized Riemann Mapping Theorem. This vein of research is still pursued by some (\cite{bonfert2007quasiconformal},\cite{bonfert2010ambient}, and \cite{bonfert2011teichmuller}) but this will not be our area of focus. In this paper, we aim to study the geometric properties of quasiregular mappings on uniformly QCH domains equipped with the quasihyperbolic metric.

\subsection{Statement of Results}
 We investigate the relationship between quasisymmetric and quasiconformal mappings. While quasiconformality is a local property that concerns the distortion of small circles, quasisymmetry is a global three point condition for maps between metric spaces which preserves the relative size of sets. See Section $2$ for full definitions.

Since the 1960's, we have known that quasiconformal and quasisymmetric mappings are equivalent when the domain and codomain are $\mathbb{R}^n$ for $n\geq 2$ \cite{gehring1962rings}. The same equivalence does not hold for other subdomains in general. For example, in \cite[p. 135]{hubbard2016teichmuller}, it was shown that a conformal map from the unit disk to the slit disk in the complex plane is $1$-quasiconformal, but is not quasisymmetric in the Euclidean metric. However, Tyson \cite{tyson1998quasiconformality} proved a quasisymmetric map is quasiconformal if the metric spaces of interest are locally compact and connected Ahlfors $Q$-regular spaces with Hausdorff dimension greater than $1.$ 

As for the converse question, much has been done to prove quasiconformal maps are quasisymmetric in metric spaces with properties similar to that of Euclidean space. In \cite{heinonen2001lectures}, Heinonen proved a quasiconformal map from a bounded uniform domain onto a bounded, linearly locally connected domain in $\mathbb{R}^n$ is quasisymmetric. Heinonen also states that analogous results can be proven using the interior metric for the domain. Moreover, Heinonen and Kosela \cite{heinonen1998quasiconformal} proved that if $X$ and $Y$ are Ahlfors $Q$-regular metric spaces with $Q>1,$ $X$ a Loewner space, $Y$ a linearly locally connected space, and $f:X\rightarrow Y$ quasiconformal, then $f:X\rightarrow Y$ is quasisymmetric. These results in the Euclidean setting are useful. However, Ackermann and the first author noticed that although in $Q$-regular metric spaces balls of radius $r$ are comparable to $r^Q,$ this is not true in hyperbolic space \cite{ackermann2021quasiconformality}. The size of the balls will grow exponentially, so the argument made in \cite{heinonen1998quasiconformal} does not hold in the hyperbolic setting. In \cite{ackermann2021quasiconformality}, they fill this gap by proving that a quasiconformal map $f:\mathbb{B}^n\rightarrow \mathbb{B}^n$ is in fact quasisymmetric, where $\mathbb{B}^n$ is equipped with the hyperbolic metric. Moreover, the notions of quasisymmetry and quasiconformality coincide in the hyperbolic setting.

We first aim to generalize Ackermann and the first author's work. By requiring $X$ to be a uniformly $K$-QCH domain equipped with the quasihyperbolic metric we gain a transitive collection of $K$-quasiconformal mappings with a desirable geometric property. This is a reasonable choice, as such domains properly generalize the transitive collection of conformal hyperbolic isometries that we are afforded in the hyperbolic setting. 
\begin{theorem}
    \label{qcisqs}
    Let $X,Y$ be proper subdomains of $\mathbb{R}^n$ and suppose $X$ is a uniformly $M$-QCH domain. Then, $f:(X,k_X)\rightarrow (Y,k_Y)$ is $K$-quasiconformal if and only if $f$ is $\eta$-quasisymmetric, in the respective quasihyperbolic metrics.
\end{theorem}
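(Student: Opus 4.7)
The plan is to address the two implications separately. The forward direction (QS $\Rightarrow$ QC) is essentially formal, while the converse is where the uniformly $M$-QCH structure plays the central role.

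First, suppose $f$ is $\eta$-quasisymmetric with respect to $k_X, k_Y$. Then the metric linear dilatation of $f$ at every point is bounded by $\eta(1)$. Since the quasihyperbolic densities $1/d(\cdot,\partial X)$ and $1/d(\cdot,\partial Y)$ are positive and continuous on their respective domains, $k_X$ and $k_Y$ are locally bi-Lipschitz equivalent to the Euclidean metric at every point. Consequently the Euclidean linear dilatation of $f$ is also bounded, and the classical metric-to-analytic theory for quasiconformality in $\mathbb{R}^n$ gives that $f$ is $K$-quasiconformal with $K=K(\eta,n)$.

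For the converse, the plan is to use the transitive action of $G$ together with the quasihyperbolic distortion theorem of Gehring-Osgood \cite{gehring1979uniform}, which asserts that every $K$-quasiconformal map between proper subdomains of $\mathbb{R}^n$ is uniformly bi-H\"{o}lder in the quasihyperbolic metrics with constants depending only on $K$ and $n$. Fix a basepoint $x_0 \in X$. Given an arbitrary triple $x, a, b \in X$, transitivity supplies $g \in G$ with $g(x)=x_0$, and we consider the $(KM)$-quasiconformal map $F := f \circ g^{-1} : X \to Y$, for which $F(x_0)=f(x)$. Applying the Gehring-Osgood bounds to $g$ itself controls $k_X(x_0, g(a))$ and $k_X(x_0, g(b))$ in terms of $k_X(x,a)$ and $k_X(x,b)$ with constants depending only on $M$ and $n$. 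The problem therefore reduces to a pointed quasisymmetry estimate at $x_0$: for any $(KM)$-quasiconformal $F : X \to Y$ and all $u, v \in X$,
\begin{equation*}
    \frac{k_Y(F(x_0), F(u))}{k_Y(F(x_0), F(v))} \leq \eta_0\!\left(\frac{k_X(x_0, u)}{k_X(x_0, v)}\right),
\end{equation*}
with $\eta_0$ depending only on $K, M, n$.

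To prove this pointed estimate, I would split into two regimes. For local triples with $u, v$ in a fixed quasihyperbolic ball $B_{k_X}(x_0,R)$, both $k_X$ and the Euclidean metric are bi-Lipschitz equivalent on this ball, and by Gehring-Osgood the image $F(B_{k_X}(x_0,R))$ lies in a controlled $k_Y$-neighborhood of $F(x_0)$ where $k_Y$ is again Euclidean-comparable; on such compacta the classical Euclidean implication from quasiconformality to quasisymmetry supplies the estimate. For large-scale triples, the Gehring-Osgood bi-H\"{o}lder bounds themselves directly control the ratio by a monotone function of $k_X(x_0,u)/k_X(x_0,v)$. The main technical hurdle will be stitching the local and large-scale regimes into a single monotone control function $\eta_0$ that is independent of $F$ and of $x_0$; once that is done, translating back via $g$ and repeating at every $x \in X$ yields a uniform quasisymmetry function $\eta = \eta(K,M,n)$ for $f$.
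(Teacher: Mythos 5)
Your quasisymmetric-implies-quasiconformal argument matches the paper's (bounded linear dilatation plus local Euclidean equivalence of the quasihyperbolic metric). The gap is in the other direction, at the step where you transfer the pointed estimate at $x_0$ back to an arbitrary point $x$ via $g\in G$. To apply the pointed estimate to $F=f\circ g^{-1}$ at the triple $(x_0,g(a),g(b))$ and conclude something about the triple $(x,a,b)$, you need the \emph{ratio} $k_X(x_0,g(a))/k_X(x_0,g(b))$ to be bounded by a monotone function of $k_X(x,a)/k_X(x,b)$ alone. Gehring--Osgood does not give this: writing $s=k_X(x,a)$, $s'=k_X(x,b)$ and $\alpha=M^{1/(n-1)}$, it yields $k_X(x_0,g(a))\leq C_1\max\{s,s^{1/\alpha}\}$ and $k_X(x_0,g(b))\geq C_2\min\{s',(s')^{\alpha}\}$, so for a fixed ratio $t=s/s'$ with both distances small the resulting bound on the image ratio is of order $t^{1/\alpha}(s')^{1/\alpha-\alpha}$, which blows up as $s'\to 0$. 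Controlling how $g$ distorts ratios of distances is essentially the quasisymmetry of $g$ itself, which is an instance of what is being proved, so the reduction as stated is circular.

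The paper closes this gap with two ingredients your plan lacks. First, it only proves \emph{weak} quasisymmetry by conjugation: comparing $L_f(x,r)$ and $\ell_f(x,r)$ at the \emph{same} radius $r$, the conjugation by $g$ only requires a uniform bound on $L_{g^{-1}}(x,r)/\ell_{g^{-1}}(x,r)$, and this is obtained not from Gehring--Osgood but from a ring-domain modulus estimate (V\"ais\"al\"a's function $\mathcal{H}_n$ combined with the $K_O$-inequality, Proposition \ref{KO-inequality}). Second, it proves separately (Theorem \ref{qsequivweakqs}) that weak quasisymmetry is equivalent to quasisymmetry between geodesic metric spaces, by chaining points along quasihyperbolic geodesics; this is where the genuine three-point control function $\eta$ is actually manufactured, and it is a substantial argument in its own right. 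Your ``stitching'' of local and large-scale regimes is viable for producing the pointed estimate at the basepoint --- the paper's Lemma \ref{localqs} does essentially this, using the local stretching bounds of Theorem \ref{FN2014} in place of the classical Euclidean theory --- but without the modulus bound on the radius distortion of $g$ and the weak-to-full upgrade, the argument does not close.
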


We then shift our attention to studying the geometric properties of particular classes of mappings. Normal meromorphic functions were introduced by Lehto and Virtanen in \cite{lehto1957boundary}, and Bloch functions were investigated by Pommerenke in \cite{pommerenke1970bloch}. Recall that a meromorphic function $f:\mathbb{D}\rightarrow \overline{\mathbb{C}}$ is \textit{normal} if $\{f\circ A: A\in G\}$ is a normal family, where $G$ is the set of M\"{o}bius maps on $\mathbb{D},$ and a holomorphic function $g:\mathbb{D}\rightarrow \mathbb{C}$ is a \textit{Bloch} function if the family $\{g(A(z))-g(A(0)): A\in G\}$ is a normal family, where $G$ is again the set of M\"{o}bius maps on $\mathbb{D}.$ These classes of mappings have been studied at length in the planar setting.

Such maps have been generalized to higher dimensions. The generalization for normal meromorphic functions was established by Vuorinen in \cite{vuorinen2006conformal}. The generalization of Bloch functions was systematically explored for the first time by the first author and Nicks in \cite{fletcher2024normal}. The first author and Nicks define \textit{normal quasiregular mappings} which are quasiregular maps $f:X\rightarrow S^n$ such that $\{f\circ A:A\in G\}$ is a normal family, where $X\subset S^n$ is a metric space arising from a conformal metric and $G$ is a collection of conformal isometries of $X.$ If the range is instead $\mathbb{R}^n$ then $f$ is normal if the family $\{f(A(x))-f(A(x_0)): A\in G\}$ is normal for some $x_0\in X.$ These easily compare to the definitions of normal meromorphic and Bloch functions. However, there are only a few known examples of domains $X$ with a transitive collection of isometries: $X = \mathbb{B}^n$ and $G$ the collection M\"{o}bius maps or $X = \mathbb{R}^n$ and $G$ the collection of translations. Consider the following definition.

\begin{definition}
    If $X,Y$ subdomains of $S^n,$ then denote by $\mathcal{Q}_K(X,Y)$ the subset of $C(X,Y)$ consisting of all $K$-quasiregular mappings from $X$ to $Y$ for $K\geq 1.$
\end{definition}

Then, we can generalize the definition of a normal quasiregular map, by loosening the restriction that $G$ be a transitive collection of isometries. We accomplish this by letting our domain be a uniformly quasiconformally homogeneous domain.

\begin{definition}\label{normalqrmapdef}
Let $n\geq 2.$ Let $(X,d_X)$ be a metric space arising from a conformal metric, and suppose $X$ is a uniformly $M$-QCH  domain. Further, let $G$ be the transitive collection of orientation-preserving $M$-quasiconformal mappings $A:X\rightarrow X$ arising from $X.$ 
\begin{itemize}
    \item[(i)] We say that a $K$-quasiregular map $f:X\rightarrow S^n$ is a \textit{normal quasiregular map} into $S^n$ if the family
    \begin{align*}
        \mathcal{F} & = \{f\circ A:A\in G\} \subset \mathcal{Q}_{KM}(X,S^n)
    \end{align*}
    is a normal family.

    \item[(ii)]We say that a $K$-quasiregular map $f:X\rightarrow \mathbb{R}^n$ is a \textit{normal quasiregular map} into $\mathbb{R}^n$ if the family
    \begin{align*}
        \mathcal{F} & = \{f(A(x))-f(A(x_0)):A\in G\} \subset \mathcal{Q}_{KM}(X,\mathbb{R}^n)
    \end{align*}
    is normal for some $x_0\in X.$\\
\end{itemize}
\end{definition} 

For the remainder of the paper, when we refer to normal quasiregular mappings, we are discussing this generalized version in Definition \ref{normalqrmapdef}. We are then able to prove geometric results about such mappings analogous to those acquired in \cite{fletcher2024normal}. The first of these refers to uniform continuity.

\begin{theorem}
\label{uniformconttheorem}
Let $X\subset S^n$ be a proper subdomain equipped with the quasihyperbolic metric. Furthermore, suppose $X$ is a uniformly $M$-QCH domain with $G$ as the transitive collection of orientation-preserving $M$-quasiconformal mappings $A:X\rightarrow X$ arising from $X.$ Then,

\begin{itemize}
    \item[(i)]$f:X\rightarrow S^n$ is a normal quasiregular mapping if and only if $f$ is uniformly continuous with respect to $k_X$ and $\sigma.$
    
    \item[(ii)]$f:X\rightarrow \mathbb{R}^n$ is a normal quasiregular mapping if and only if $f$ is uniformly continuous with respect to $k_X$ and the Euclidean distance.\\ 
\end{itemize}
\end{theorem}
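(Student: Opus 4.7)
The plan is to reduce both implications in (i) and (ii) to equicontinuity via Arzel\`a--Ascoli, leveraging a uniform modulus of continuity for the maps in $G$. The crucial input is that each $A\in G$ is $M$-quasiconformal on $(X,k_X)$, so Theorem~\ref{qcisqs} makes both $A$ and $A^{-1}$ (the latter being $M$-quasiconformal as well) $\eta$-quasisymmetric with $\eta=\eta_{M,n}$; combined with the Gehring--Osgood-type quasihyperbolic distortion estimate of \cite{gehring1979uniform}, this yields a continuous modulus $\omega=\omega_{M,n}\colon[0,\infty)\to[0,\infty)$ with $\omega(0)=0$ satisfying
\[
k_X(A^{\pm 1}(u),A^{\pm 1}(v))\le\omega(k_X(u,v))\qquad\text{for every } A\in G \text{ and } u,v\in X.
\]
I will use this modulus as the workhorse of the argument.

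For the forward direction of (i), fix a basepoint $x_0\in X$ and suppose $\mathcal{F}=\{f\circ A:A\in G\}$ is normal. As $S^n$ is compact, normality is equivalent, by Arzel\`a--Ascoli, to equicontinuity on $k_X$-compact subsets; in particular, for every $\varepsilon>0$ there is $\delta>0$ such that $k_X(x_0,z)<\delta$ implies $\sigma(f(A(x_0)),f(A(z)))<\varepsilon$ for all $A\in G$. Given $x,y\in X$ with $\omega(k_X(x,y))<\delta$, use transitivity to pick $A\in G$ with $A(x_0)=x$ and set $z=A^{-1}(y)$; then $k_X(x_0,z)\le\omega(k_X(x,y))<\delta$, whence $\sigma(f(x),f(y))<\varepsilon$, proving that $f$ is uniformly continuous from $(X,k_X)$ to $(S^n,\sigma)$. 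The converse is dual: assuming $f$ is uniformly continuous, a compact $K\subset X$ and $x,y\in K$ with $k_X(x,y)$ small yield $k_X(A(x),A(y))\le\omega(k_X(x,y))$ small, so uniform continuity of $f$ forces $\sigma(f(A(x)),f(A(y)))<\varepsilon$ uniformly in $A$, giving equicontinuity on $K$ and hence normality.

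Part (ii) follows the same template with $\sigma$ replaced by the Euclidean distance and the family taken to be $\mathcal{F}=\{f(A(\cdot))-f(A(x_0)):A\in G\}$; the constant $f(A(x_0))$ cancels in all differences, so the equicontinuity computations carry over verbatim. One additional ingredient is local uniform boundedness of $\mathcal{F}$, which Arzel\`a--Ascoli requires for $\mathbb{R}^n$-valued families. This is automatic from the fact that every member of $\mathcal{F}$ vanishes at $x_0$ together with equicontinuity on $K\cup\{x_0\}$; alternatively, one chains the uniform continuity of $f$ along a $k_X$-path in $A(K)$ of length at most $\omega(\mathrm{diam}_{k_X}(K))$ to bound $|f(A(x))-f(A(x_0))|$ uniformly in $A\in G$ and $x\in K$.

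The main obstacle is securing the uniform modulus $\omega$. Theorem~\ref{qcisqs} provides an $\eta$-quasisymmetric estimate for each individual $A\in G$, but the three-point quasisymmetry condition does not immediately produce a one-variable modulus independent of normalization; the uniform $M$-QCH hypothesis is essential to upgrade this to an estimate of the form $k_X(A(u),A(v))\le\omega(k_X(u,v))$ valid for \emph{all} $A\in G$ simultaneously. Once $\omega$ is in place, the remainder of the proof is the bookkeeping sketched above, parallel to the hyperbolic-case arguments in \cite{fletcher2024normal}.
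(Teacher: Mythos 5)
Your proposal is essentially correct and follows the same overall strategy as the paper: normalize to a basepoint $x_0$ via the transitive collection $G$, control the quasihyperbolic distortion of $A^{\pm 1}$ uniformly over $A\in G$, and convert between normality and (equi)continuity via Arzel\`a--Ascoli, with a chaining argument to rule out unbounded orbits in part (ii). The main difference is in how you secure the key uniform modulus $\omega$ with $k_X(A^{\pm1}(u),A^{\pm1}(v))\le\omega(k_X(u,v))$ for all $A\in G$: you route this through Theorem~\ref{qcisqs} (quasisymmetry) and flag it as the main obstacle, but quasisymmetry is scale-invariant and cannot by itself yield a one-variable modulus; the detour is unnecessary because Theorem~\ref{GO} (Gehring--Osgood) already gives exactly $\omega(t)=c\max\{t,t^{1/\alpha}\}$ with $c$ and $\alpha$ depending only on $n$ and $M$ (and applies to $A^{-1}$ since inverses of $M$-quasiconformal maps are $M$-quasiconformal), which is precisely what the paper uses. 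A second, smaller difference: where you use raw equicontinuity from Theorem~\ref{FNTheorem2.8}, the paper instead invokes the H\"older characterization of normality (Theorem~\ref{FNTheorem3.7} together with Proposition~\ref{FNprop2.6}) to get a quantitative estimate $\sigma(f(A(x)),f(A(y)))\le L_E k_X(x,y)^{\alpha}$ on compacta, which makes the $\varepsilon$--$\delta$ bookkeeping explicit; your softer version works equally well for this theorem but yields less. Finally, in (ii) your first justification of local uniform boundedness (``automatic from vanishing at $x_0$ plus equicontinuity on $K\cup\{x_0\}$'') needs the connectedness/chaining step to actually propagate the bound across $K$; your alternative chaining argument is the correct one and is what the paper carries out in the unbounded-orbit case.
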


If in the definition above, we replace $S^n$ or $\mathbb{R}^n$ with a subdomain $Y\subset S^n$ equipped with the quasihyperbolic metric, and $Y$ omits enough points (Rickman's constant), then $f$ is automatically a normal quasiregular mapping by \cite[Thm IV.2.1]{rickman2012quasiregular}. Thus, only studying when the range is $S^n$ or $\mathbb{R}^n$ is reasonable.

Next, we prove a global H\"{o}lder continuity result for normal quasiregular maps when the range is $S^n$ equipped with the spherical metric.
\begin{theorem}
\label{holderSn}
 Let $X\subset S^n$ be a proper subdomain equipped with the quasihyperbolic metric. Furthermore, suppose $X$ is a uniformly $M$-QCH domain with $G$ as the transitive collection of orientation-preserving $M-$quasiconformal mappings $A:X\rightarrow X$ arising from $X.$ Let $f:X\rightarrow S^n$ be a $K-$quasiregular mapping and let $\beta = (KM^2)^{1/(1-n)}.$ Then $f$ is normal if and only if $f$ is globally $\beta$-H\"{o}lder, that is, there exists $C_0>0$ such that 
\begin{align}
    \sigma(f(x),f(y)) & \leq C_0k_X(x,y)^\beta,\label{4.2.1}
\end{align}
for all $x,y\in X.$
\end{theorem}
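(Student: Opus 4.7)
\emph{Strategy.} The exponent factors as $\beta = \alpha_1\alpha_2$ with $\alpha_1 = (KM)^{1/(1-n)}$ and $\alpha_2 = M^{1/(1-n)}$. Here $\alpha_1$ is the H\"older exponent afforded by the normal family $\mathcal{F}\subset\mathcal{Q}_{KM}(X,S^n)$ via a Rickman/V\"ais\"al\"a--type equicontinuity estimate for normal families of quasiregular mappings, while $\alpha_2$ is the Gehring--Osgood H\"older exponent for $M$-quasiconformal self-maps of $X$ in the quasihyperbolic metric: there is $C(M,n)$ such that $k_X(A(u),A(v))\leq C(M,n)\,k_X(u,v)^{\alpha_2}$ whenever $k_X(u,v)\leq 1$ and $A$ is $M$-quasiconformal on $X$. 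This factorization drives both directions.

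\emph{Forward direction.} Assume $f$ is normal. Fix a basepoint $x_0\in X$ and a quasihyperbolic ball $B_0 = \{u\in X : k_X(x_0,u) < r_0\}$. Normality of $\mathcal{F}$ supplies $C_1>0$ such that, uniformly in $A\in G$,
\begin{equation*}
\sigma\bigl((f\circ A)(u),(f\circ A)(v)\bigr)\leq C_1\,k_X(u,v)^{\alpha_1}\qquad\text{for all } u,v\in B_0.
\end{equation*}
Given $x,y\in X$ with $k_X(x,y)$ small, use transitivity of $G$ to pick $A\in G$ with $A(x_0)=x$; then $f(x)=(f\circ A)(x_0)$ and $f(y)=(f\circ A)(A^{-1}(y))$. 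Since $A^{-1}$ is also $M$-quasiconformal, Gehring--Osgood provides $C_2 = C_2(M,n)$ with $k_X(x_0,A^{-1}(y))\leq C_2\,k_X(x,y)^{\alpha_2}$, which lies in $B_0$ once $k_X(x,y)$ is small enough. Chaining the two estimates yields
\begin{equation*}
\sigma(f(x),f(y))\leq C_1(C_2\,k_X(x,y)^{\alpha_2})^{\alpha_1} = C\,k_X(x,y)^{\beta}.
\end{equation*}
For $k_X(x,y)$ bounded below by a positive constant, $\sigma(f(x),f(y))\leq \mathrm{diam}(S^n)$ is absorbed into the final constant $C_0$, yielding (\ref{4.2.1}) globally.

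\emph{Reverse direction.} Assume $f$ is globally $\beta$-H\"older. For $A\in G$ and $u,v\in X$ with $k_X(u,v)$ small, Gehring--Osgood applied to $A$ gives $k_X(A(u),A(v))\leq C_3\,k_X(u,v)^{\alpha_2}$, so
\begin{equation*}
\sigma\bigl((f\circ A)(u),(f\circ A)(v)\bigr)\leq C_0\,k_X(A(u),A(v))^{\beta} \leq C_0 C_3^{\beta}\,k_X(u,v)^{\alpha_2\beta},
\end{equation*}
uniformly in $A$. This gives equicontinuity of $\mathcal{F}$ on compact subsets of $X$, and since $S^n$ is compact, Arzel\`a--Ascoli yields normality of $\mathcal{F}$.

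\emph{Main obstacle.} The delicate point is the forward direction: one must have in hand a uniform local H\"older estimate for the normal family $\mathcal{F}$ of $KM$-quasiregular mappings into $S^n$ with the precise exponent $\alpha_1$, and then track the Gehring--Osgood distortion of $A^{-1}$ carefully to land on the target exponent $\beta$. The composition of these two classical distortion phenomena is exactly what forces $M$ to appear with exponent $2$ beneath the $1/(1-n)$ power in the statement. Particular care is needed to ensure that $A^{-1}(y)\in B_0$ when applying the equicontinuity estimate, which is why we split the argument according to the size of $k_X(x,y)$.
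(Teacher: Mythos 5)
Your proposal is correct and follows essentially the same route as the paper's proof: both directions rest on the factorization $\beta=(KM)^{1/(1-n)}\cdot M^{1/(1-n)}$, combining the local uniform H\"{o}lder estimate for the normal family $\mathcal{F}\subset\mathcal{Q}_{KM}(X,S^n)$ (Theorem \ref{FNTheorem3.7} with Proposition \ref{FNprop2.6}) with the Gehring--Osgood distortion of the transitive maps $A\in G$, handling large $k_X(x,y)$ via the boundedness of $\sigma$ and proving the converse by transporting the global H\"{o}lder bound through $A$ to get local uniform $\omega$-continuity of $\mathcal{F}$ and then invoking compactness of $S^n$. The only cosmetic difference is that you normalize $A(x_0)=x$ where the paper takes $A(x_0)=y$, which changes nothing.
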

A similar result can be shown for the normal quasiregular maps when the range is $\mathbb{R}^n.$
\begin{theorem}
\label{holderRn}
Let $X\subset S^n$ be a proper subdomain equipped with the quasihyperbolic metric. Suppose $X$ is a uniformly $M$-QCH domain with $G$ as the transitive collection of orientation-preserving $M$-quasiconformal mappings $A:X\rightarrow X$ arising from $X.$ Let $f:X\rightarrow \mathbb{R}^n$ be a $K$-quasiregular mapping and let $\beta = (KM^2)^{1/(1-n)}.$ Then, $f$ is normal if and only if there exists $C_0>0$ such that
\begin{align}
    \abs{f(x)-f(y)} & \leq C_0\max\{k_X(x,y),k_X(x,y)^{\beta}\},\label{4.5}
\end{align}
for all $x,y\in X.$
\end{theorem}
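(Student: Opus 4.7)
The plan is to prove both directions, with the forward implication (``normal implies estimate'') as the substantive one; my approach mirrors that of Theorem \ref{holderSn}, with two adjustments appropriate to the unbounded Euclidean range: a local H\"older estimate on small quasihyperbolic distances, combined with a chaining argument producing linear growth on large distances.

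I would handle the reverse implication first and quickly. If \eqref{4.5} holds, then given $\varepsilon>0$ I can choose $\delta>0$ small enough that $C_0\max\{\delta,\delta^\beta\}<\varepsilon$, making $f$ uniformly continuous with respect to $k_X$ and the Euclidean distance; Theorem \ref{uniformconttheorem}(ii) then yields that $f$ is a normal quasiregular mapping.

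For the forward implication, assume $f$ is normal and fix a basepoint $x_0\in X$. The family $\mathcal{F}=\{g_A : A\in G\}$ with $g_A(z)=f(A(z))-f(A(x_0))$ is normal in $\mathcal{Q}_{KM}(X,\mathbb{R}^n)$; since $g_A(x_0)=0$ for every $A$, no subsequence can escape to infinity, so $\mathcal{F}$ is locally uniformly bounded on quasihyperbolic balls about $x_0$. I would fix $r_0>0$ and set $M_0 = \sup_{A\in G}\sup\{|g_A(z)| : k_X(z,x_0)\leq r_0\}$, finite by normality. Each $g_A$ is $KM$-quasiregular with Euclidean oscillation at most $2M_0$ on this ball, so the H\"older-type distortion estimate for bounded quasiregular maps, applied uniformly over $\mathcal{F}$ in the same manner as in Theorem \ref{holderSn}, yields a constant $C_1$ and radius $r_1<r_0$ with
\[
    |g_A(u)-g_A(v)| \leq C_1 \, k_X(u,v)^{(KM)^{1/(1-n)}}
\]
for all $A\in G$ and all $u,v$ in the $k_X$-ball of radius $r_1$ about $x_0$. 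Given $x,y\in X$ with $k_X(x,y)$ small, transitivity furnishes $A\in G$ with $A(x_0)=x$; setting $u=x_0$ and $v=A^{-1}(y)$ and applying the Gehring--Osgood distortion theorem to the $M$-quasiconformal self-map $A^{-1}$ of $X$ controls $k_X(x_0,A^{-1}(y))$ by $C\,k_X(x,y)^{M^{1/(1-n)}}$ for small $k_X(x,y)$. Substituting and simplifying gives $|f(x)-f(y)| \leq C_1' \, k_X(x,y)^\beta$ for $k_X(x,y)\leq \delta_0$ with a uniform $\delta_0>0$, since $M^{1/(1-n)} \cdot (KM)^{1/(1-n)} = (KM^2)^{1/(1-n)} = \beta$.

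For $k_X(x,y)>\delta_0$, I would subdivide a quasihyperbolic near-geodesic from $x$ to $y$ into $N=\lceil k_X(x,y)/\delta_0\rceil$ consecutive points at pairwise $k_X$-distance at most $\delta_0$, apply the local H\"older estimate to each link, and sum via the triangle inequality to obtain $|f(x)-f(y)| \leq N C_1' \delta_0^\beta \leq C_2\, k_X(x,y)$. Combining the two regimes yields the $\max$ bound \eqref{4.5}. The hard part will be the bookkeeping of exponents in the small-distance case: verifying that the exponent is exactly $(KM^2)^{1/(1-n)}$ depends on the precise interplay between the quasiregular H\"older bound on $g_A$ (contributing $(KM)^{1/(1-n)}$) and the Gehring--Osgood quasihyperbolic distortion bound for $A^{-1}$ (contributing the additional factor $M^{1/(1-n)}$). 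Once these constants are tracked as in Theorem \ref{holderSn}, the chaining step and the appeal to Theorem \ref{uniformconttheorem} for the converse are routine.
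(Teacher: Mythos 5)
Your forward direction is essentially the paper's own argument: a local $\beta$-H\"older estimate obtained by transporting $x,y$ to a fixed compact neighborhood of $x_0$ via some $A\in G$, using the uniform $(KM)^{1/(1-n)}$-H\"older modulus of the normal family $\{f\circ A - f(A(x_0))\}$ on that compact set (the paper cites Corollary \ref{FNCorollary3.10} where you rederive it from local boundedness), composing with the Gehring--Osgood distortion of $A$ to pick up the extra factor $M^{1/(1-n)}$ and land on $\beta=(KM^2)^{1/(1-n)}$, and then a chaining argument along a quasihyperbolic geodesic to get the linear bound at large distances; the exponent bookkeeping you flag as the delicate point works out exactly as you describe. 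The one place you genuinely diverge is the converse: the paper verifies directly that \eqref{4.5} makes $\mathcal{F}=\{f(A(x))-f(A(x_0)):A\in G\}$ locally uniformly $\omega$-continuous with a relatively compact orbit at $x_0$ and invokes Theorem \ref{FNTheorem2.9}, whereas you observe that \eqref{4.5} trivially implies uniform continuity with respect to $k_X$ and the Euclidean distance and then quote Theorem \ref{uniformconttheorem}(ii). Your route is valid (Theorem \ref{uniformconttheorem} is proved independently and earlier, so there is no circularity) and is shorter; the paper's route has the minor advantage of being self-contained within the normal-family machinery and of exhibiting the explicit modulus of continuity of the family, but nothing of substance is lost either way.
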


 While our focus is on uncovering geometric properties of normal quasiregular mappings, it is useful to note some examples and non-examples of such mappings. Recall in \cite[Thm 9]{lehto1957boundary}, Lehto and Virtanen prove a planar meromorphic function cannot be normal in any neighborhood of an isolated essential singularity. For example, there do not exist any normal meromorphic maps in $\mathbb{B}^2\setminus \{0\}$ equipped with the hyperbolic metric for which $0$ is an essential singularity. This result was then generalized to higher dimensions by Heinonen and Rossi in \cite[Thm 2.3]{heinonen1990lindelof}. They show if $f: (\mathbb{B}^n\setminus \{0\},d_\delta)\rightarrow (S^n,\sigma)$ has an essential singularity at $0,$ then $f$ cannot be a normal quasiregular map, where the metric $d_\delta$ is defined by
\begin{align*}
    d_\delta(x,y) = \inf\int_\gamma \delta(s) \abs{ds},
\end{align*}
the infimum is taken over all paths $\gamma$ joining $x$ and $y$ in $\mathbb{B}^n\setminus\{0\},$ and the density $\delta$ is required to be a positive continuous function in $\mathbb{B}^n\setminus \{0\}$ with $\delta(x) = o(\abs{x}^{-1}),$ as $x\rightarrow 0. $ The construction of $d_\delta$ is meant to mimic the behavior of the hyperbolic metric on $\mathbb{B}^2\setminus\{0\}$ near the singularity. However, the quasihyperbolic metric is not comparable to $d_\delta$ on $\mathbb{B}^n\setminus\{0\},$ as its density is $O(\abs{x}^{-1})$ as $x\rightarrow 0.$ Thus, it would be interesting to see whether a normal quasiregular map from a punctured neighborhood $U\setminus\{x_0\}$ equipped with the quasihyperbolic metric to $S^n$ equipped with the spherical metric requires $x_0$ be a removable singularity or a pole. We leave this as an open question.

The paper is organized as follows. In Section $2,$ we provide necessary preliminary information. Sections $3$ and $4$ house our main results. Section $3$ focuses on investigating the relationship between quasisymmetric and quasiconformal mappings on $K$-QCH domains, while Section $4$ centers on geometric properties of normal quasiregular mappings on $K$-QCH domains.

\section{Preliminaries}

\subsection{Quasihyperbolic Metric}
Let $X\subset S^n$ be a domain. We say the form $\lambda_X(x)\abs{dx}$ is a \textit{conformal metric} on $X$ if $\lambda_X$ is a strictly positive continuous function. The conformal metric then induces a distance function given by
\begin{align*}
    d_X(x,y) & = \inf\int_\gamma \lambda_X(x)\abs{dx},
\end{align*}
where the infimum is over all paths joining $x$ to $y$ in $X.$ For example, the Euclidean distance function on $\mathbb{R}^n$ arises from the conformal metric $\lambda_X(x) = 1$ and the hyperbolic distance $\rho$ on $\mathbb{B}^n$ arises from $\lambda_X(x) = 2(1-\abs{x}^2)^{-1}.$ A final example to note is if we view the unit sphere $S^n$ in $\mathbb{R}^{n+1}$ as $\mathbb{R}^n\cup \{\infty\},$ then we define the spherical distance function on $S^n$ by
\begin{align*}
    \sigma(x,y) & = \inf \int_\gamma \frac{2}{1+\abs{x}^2}\abs{dx},
\end{align*}
where $x,y\in S^n$ and the infimum is over all paths in $S^n$ from $x$ to $y.$\\
Throughout this paper we will primarily focus on one metric in particular. Given a proper subdomain $X\subset\mathbb{R}^n$ the quasihyperbolic distance function arises from the conformal metric
\begin{align*}
    \lambda_X(x) & = \frac{1}{d(x,\partial X)},
\end{align*}
where $d$ here denotes Euclidean distance. We denote the quasihyperbolic distance function on $X$ by $k_X,$ and for $x,y\in X$ it is given by
\begin{align*}
    k_X(x,y) & = \inf \int_\gamma \frac{1}{d(x,\partial X)}\abs{dx},
\end{align*}
where again the infimum is taken over all paths in $X$ joining $x$ to $y.$ The quasihyperbolic metric was introduced by Gehring and Palka in the highly influential paper \cite{gehring1976quasiconformally}.  Let us state a few important, non-trivial results for domains equipped with the quasihyperbolic metric.

\begin{proposition}[\cite{gehring1979uniform}, Lemma 1]
    \label{geodesicspace}
    Let $X$ be a proper subdomain of $\mathbb{R}^n$. For every $x,y\in X$ there exists a quasi-hyperbolic geodesic $\gamma$ with endpoints $x$ and $y.$
\end{proposition}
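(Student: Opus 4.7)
The plan is to prove this via a standard compactness argument applied to a minimizing sequence of paths. Equivalently, one can appeal to the Hopf--Rinow--Cohn-Vossen theorem for length spaces after verifying that $(X,k_X)$ is a proper length space; I will sketch the direct approach since the key estimate is the same.

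First I would establish the crucial containment lemma: any rectifiable path $\gamma$ in $X$ starting at a point $x_0$ and of quasihyperbolic length at most $L$ satisfies $d(z,\partial X) \geq d(x_0,\partial X)e^{-L}$ for every $z$ on $\gamma$. This follows because along such a path the function $t \mapsto \log d(\gamma(t),\partial X)$ is Lipschitz with constant $1$ relative to the Euclidean arc length, while the quasihyperbolic line element is $|dx|/d(x,\partial X)$, so the total change in $\log d(\cdot,\partial X)$ is bounded by the quasihyperbolic length. Consequently, any candidate path realizing a length close to $k_X(x,y)$ lies in the compact subset
\begin{equation*}
K_L = \{z \in X : d(z,\partial X) \geq \delta_0, \ |z| \leq R_0\}
\end{equation*}
for suitable $\delta_0 > 0$ and $R_0 < \infty$ depending only on $x,y$ and a fixed upper bound $L$ on the lengths considered.

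Next I would take a minimizing sequence of rectifiable paths $\gamma_k$ connecting $x$ to $y$ whose quasihyperbolic lengths tend to $k_X(x,y)$, and reparametrize each by constant speed with respect to quasihyperbolic arc length on $[0,1]$. By the containment lemma, all the $\gamma_k$ lie inside a common compact set $K \subset X$. On $K$ the density $1/d(\cdot,\partial X)$ is bounded above and below by positive constants, so the quasihyperbolic and Euclidean metrics are bi-Lipschitz equivalent on $K$. Constant-speed parametrization then yields a uniformly equicontinuous and uniformly bounded family, so Arzel\`a--Ascoli produces a subsequence converging uniformly to a continuous path $\gamma:[0,1]\to K$ joining $x$ to $y$.

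Finally I would invoke lower semicontinuity of length under uniform convergence: since the density $\lambda_X$ is continuous and positive on the compact set $K$, the quasihyperbolic length functional is lower semicontinuous along uniformly convergent sequences of paths in $K$, hence the length of $\gamma$ is at most $\liminf_k \operatorname{length}_{k_X}(\gamma_k) = k_X(x,y)$. Combined with the reverse inequality coming from the definition of $k_X$, this shows $\gamma$ is a quasihyperbolic geodesic from $x$ to $y$. The main obstacle is the containment lemma showing that minimizing paths cannot escape to $\partial X$; once that estimate is in hand, the remainder is a routine application of Arzel\`a--Ascoli together with lower semicontinuity of length.
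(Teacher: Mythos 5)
The paper states this proposition without proof, citing it as Lemma 1 of Gehring--Osgood, and your argument is essentially the standard one used there: the estimate $\bigl|\log\bigl(d(z,\partial X)/d(x_0,\partial X)\bigr)\bigr|\leq L$ along paths of quasihyperbolic length at most $L$ confines a minimizing sequence to a compact subset of $X$, after which Arzel\`a--Ascoli and lower semicontinuity of the length functional yield a geodesic. Your proof is correct and takes the same approach as the cited source.
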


\begin{proposition}[\cite{gehring1976quasiconformally}, Corollary 2.2]
    \label{qhcomplete}
    If $X\subset \mathbb{R}^n$ proper subdomain of $\mathbb{R}^n$, then $k_X$ is a complete metric on $D,$ which determines the usual topology there.
\end{proposition}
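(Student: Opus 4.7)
The plan is to establish the proposition in three steps: $k_X$ is a metric, it induces the Euclidean topology, and it is complete.

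First, I verify that $k_X$ is a metric. Symmetry, finiteness, and the triangle inequality are standard for distance functions built from infima over rectifiable paths against a positive continuous conformal density, and strict positivity on distinct points follows from $\lambda_X(x) = 1/d(x,\partial X)$ being continuous and strictly positive on $X$, hence bounded below on any compact piece of a joining path.

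Second, for topology equivalence, fix $x_0 \in X$ and let $r := d(x_0,\partial X) > 0$. For $|x - x_0| < r/2$, the straight segment from $x_0$ to $x$ lies in $B(x_0, r/2) \subset X$ where $d(\cdot,\partial X) > r/2$, so parametrizing by arclength gives the upper bound
$$k_X(x_0, x) \leq \frac{2|x - x_0|}{r}.$$
For the lower bound, any rectifiable path $\gamma$ of length $L$ from $x_0$ to $x$ satisfies $d(\gamma(s),\partial X) \leq r + s$ by the triangle inequality for Euclidean distance, so
$$k_X(x_0, x) \geq \int_0^L \frac{ds}{r + s} = \log\!\Big(1 + \frac{L}{r}\Big) \geq \log\!\Big(1 + \frac{|x - x_0|}{r}\Big).$$
These two estimates together show that $k_X$ and the Euclidean metric generate the same neighborhood filter at every point of $X$.

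The main content is completeness. The key tool is that $d(\cdot, \partial X)$ is $1$-Lipschitz with respect to the Euclidean metric. Writing $\phi(t) := d(\gamma(t), \partial X)$ for an arclength-parametrized rectifiable path $\gamma$, this gives $|\phi'| \leq 1 = |\gamma'|$ almost everywhere, and therefore
$$\int_\gamma \frac{|dx|}{d(x,\partial X)} = \int_0^L \frac{|\gamma'|}{\phi}\, dt \geq \int_0^L \frac{|\phi'|}{\phi}\, dt \geq |\log \phi(L) - \log \phi(0)|.$$
Taking the infimum over paths yields the global estimate $k_X(x, y) \geq |\log d(x,\partial X) - \log d(y,\partial X)|$. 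Now let $\{x_n\}$ be $k_X$-Cauchy, so $k_X(x_1, x_n) \leq C$ for some constant $C$. The Euclidean lower bound from the second step forces $|x_1 - x_n| \leq d(x_1,\partial X)(e^C - 1)$, while the logarithmic estimate forces $d(x_n,\partial X) \geq d(x_1,\partial X)\, e^{-C}$. Hence $\{x_n\}$ is confined to a compact subset of $X$, on which $d(\cdot,\partial X)$ is bounded above and below and so $k_X$ and the Euclidean metric are bi-Lipschitz equivalent. Thus $\{x_n\}$ is Euclidean-Cauchy, converges in $\mathbb{R}^n$ to some $x^* \in X$, and the local upper bound upgrades this to $k_X$-convergence.

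The main obstacle is ruling out escape of a Cauchy sequence to $\partial X$. The logarithmic lower bound makes this transparent, since divergence of $\log d(x_n,\partial X) \to -\infty$ would immediately contradict $k_X$-Cauchyness; every other step is a routine consequence of the straight-segment upper bound and the Lipschitz property of the boundary distance.
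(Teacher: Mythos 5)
The paper offers no proof of this proposition: it is imported verbatim from Gehring--Palka \cite{gehring1976quasiconformally} (Corollary 2.2 there), so there is nothing internal to compare against. Your argument is correct and is essentially the classical one: the two lower bounds you derive, namely $k_X(x,y)\geq \log\bigl(1+|x-y|/d(x,\partial X)\bigr)$ and $k_X(x,y)\geq \bigl|\log d(x,\partial X)-\log d(y,\partial X)\bigr|$, are precisely the content of Gehring--Palka's Lemma 2.1, and combining them with the straight-segment upper bound yields topology equivalence and completeness exactly as you describe. One small overstatement: on a compact subset of $X$ the metrics $k_X$ and the Euclidean metric need not be bi-Lipschitz equivalent in the upper direction (a segment joining two points of the compact set may leave $X$), but you never actually use that direction --- the logarithmic lower bound alone, together with the upper bound $d(x_n,\partial X)\leq d(x_1,\partial X)+|x_n-x_1|$, already forces a $k_X$-Cauchy sequence to be Euclidean-Cauchy, and the local straight-segment estimate upgrades Euclidean convergence to $k_X$-convergence. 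So the proof stands as written modulo that one phrase.
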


Before we conclude our discussion on metric spaces, consider the following restriction we can place on a metric space.

\begin{definition}
    A \textit{path} in $\mathbb{R}^n$ is a continuous mapping $\gamma: [a,b]\rightarrow \mathbb{R}^n.$ If $a = t_0 \leq t_1\leq \ldots\leq t_m = b$ is a partition of $[a,b].$ The supremum of the sums $\sum_{i=1}^m \abs{\gamma(t_i)-\gamma(t_{i-1})}$ over all partitions is called the $\textit{length}$ of $\gamma,$ denoted $\ell(\gamma).$ If $\ell(\gamma)<\infty$ then $\gamma$ is called a \textit{rectifiable} path. 
\end{definition}

\begin{definition}
    \label{quasiconvex}
    Let $(X,d_X)$ be a metric space. Then $X$ is \textit{$c$-quasiconvex}, for $c\geq 1,$ if each pair of points $x,y\in X$ can be joined by a rectifiable path $\gamma$ such that the length of the path $\ell(\gamma)\leq cd_X(x,y).$ 
\end{definition}
\begin{remark}
    \label{quasihypisquasconvex}
    If $(X,d_X)$ is a geodesic metric space then for all $x,y$ we can find a geodesic $\gamma$ with endpoints $x$ and $y,$ and $\ell(\gamma) = d_X(x,y).$ So, $(X,d_X)$ is also $1$-quasiconvex. In particular, Proposition \ref{geodesicspace} implies $(X,k_X)$ is $1$-quasiconvex.
\end{remark}

\subsection{Quasiregular Mappings}

Quasiregular mappings in $\mathbb{R}^n$ are a natural generalization of holomorphic functions in the plane. We provide an outline of necessary definitions for quasiregular and quasiconformal mappings below, but refer to Rickman's monograph \cite{rickman2012quasiregular} and V\"{a}is\"{a}l\"{a}'s lectures \cite{vaisala2006lectures} for more details. We begin with the definition of a quasiregular map and an equivalent characterization of a quasiconformal map. Then, we provide a few tools and results which will be of use later on. 

\begin{definition}(Analytic Definition)
    Let $U\subset\mathbb{R}^n$ be a domain. A map $f:U\rightarrow \mathbb{R}^n$ is said to be $K$-\textit{quasiregular} if $f$ is $\mathrm{ACL}^n$ and if there exists a constant $K\geq 1$ such that
    \begin{align*}
        \abs{f'(x)}^n\leq KJ_f(x),
    \end{align*}
    almost everywhere in $U.$ The smallest $K$ for which this holds is called the \textit{outer dilatation} of $f,$ denoted $K_O(f).$ Moreover, if $f$ is $K$-quasiregular, then
    \begin{align*}
        J_f(x) \leq K'\min_{\abs{h}=1}\abs{f'(x)h}^n,
    \end{align*}
    where $K'$ is the smallest value for which the inequality holds and is called the \textit{inner dilatation}, denoted $K_I(f).$ The \textit{maximal dilatation} is $K(f) = \max\{K_O(f),K_I(f)\}.$ If $f$ is quasiregular and also injective, then $f$ is called \textit{quasiconformal}.
\end{definition}

As quasiregular mappings are not required to be injective, it is useful to determine whether they are injective on ``small scales". The \textit{local index} of a quasiregular map $f$ at $x_0\in \mathbb{R}^n$ is
\begin{align*}
    i(x_0,f) & = \inf_U \sup_x \mathrm{card}(f^{-1}(x)\cap U),
\end{align*}
where the infimum is taken over all neighborhoods $U$ of $x_0.$ If $i(x_0,f) = 1$ then $f$ is called \textit{locally injective at} $x_0.$ A point $x_0$ where $i(x_0,f)>1$ is called a \textit{branch point} of $f.$ 

Since we will be working with metric spaces, considering the metric definition of quasiconformal mappings will be of use to us. Let us define some preliminary notation. Let $f:(X,d_X)\rightarrow (Y,d_Y)$ be a homeomorphism, $x_0\in X$ and $r>0$ be such that $\overline{B(x_0,r)}\subset X.$ Then, we define the \textit{linear dilatations}    \begin{align*}
        L_f(x_0,r) & = \sup\{d_Y(f(x),f(x_0)):d_X(x,x_0)=r\},\\
        \ell_f(x_0,r) & = \inf\{d_Y(f(x),f(x_0)):d_X(x,x_0)=r\}.
    \end{align*}
We see that $L_f(x_0,r)$ is measuring the maximal stretch of $\partial B_X(x_0,r)$ by $f$ and $\ell_f(x_0,r)$ measures the minimal stretch. This idea is key to proving one of our main results later on. However, for now, we can use this notion of maximal and minimal stretch to formulate an equivalent definition of a quasiconformal map.
\begin{definition} (Metric Definition)
    With the notation as above, the \textit{linear dilatation} of $f$ at $x_0$ is 
    \begin{align*}
        H_f(x_0) & = \limsup_{r\rightarrow 0} \frac{L_f(x_0,r)}{\ell_f(x_0,r)}.
    \end{align*}
    Then, $f$ is $K$-quasiconformal if and only if $\|H_f\|_\infty\leq K$ for some $K\geq 1.$
\end{definition}\label{metricspacedefofqc}
The analytic and metric definitions of quasiconformal maps were proven to be equivalent in \cite[Corollary 4]{gehring1962rings}. However, there is yet another characterization of quasiconformal mappings which uses modulus of path families. Although this interpretation of quasiconformal maps will not be of use to us, modulus of path families serve as a crucial tool in proving one of our main results. Thus, we take a moment to define the modulus of path families and note some important properties

A \textit{path family}, denoted $\Gamma,$ is a collection of paths $\gamma$ in $\mathbb{R}^n.$ Let $F(\Gamma)$ be the set of all non-negative Borel functions $\rho:\mathbb{R}^n\rightarrow \mathbb{R}$ such that
\begin{align*}
    \int_\gamma \rho\hspace{1mm} dm & \geq 1
\end{align*}
for every rectifiable curve $\gamma \in \Gamma,$ where $m$ is the Lebesgue measure. Now, we can properly define the modulus of a path family below.
\begin{definition}
    The modulus of $\Gamma,$ denoted $M(\Gamma)$ is defined by
    \begin{align*}
        \inf_{\rho\in F(\Gamma)} \int_{\mathbb{R}^n} \rho^n dm,
    \end{align*}
    where $m$ is Lebesgue measure. If $F(\Gamma) = \emptyset,$ then we set $M(\Gamma) = \infty.$ 
\end{definition}
Next, we acknowledge a few properties of the modulus of path families that will be important to us. We note that this does not cover all information on modulus of path families, so please consult \cite[Chapter II]{vaisala2006lectures} for further information.  First, if every path in the family $\Gamma_1$ is a subpath of the family $\Gamma_2,$ then $M(\Gamma_2)\leq M(\Gamma_1).$ In other words, shorter paths will result in a larger modulus. To see this in action, let $0<a<b<\infty$ and the ring domain $A = \{x\in \mathbb{R}^n:a<\abs{x}<b\}.$ Also, set $\Gamma_A$ to be the family of paths joining $\{x:\abs{x}=a\}$ and $\{x:\abs{x}=b\}$ in $A.$ Then,
\begin{align*}
    M(\Gamma_A) & = \omega_{n-1}\left(\log\left(\frac{b}{a}\right)\right)^{1-n},
\end{align*}
where $\omega_{n-1}$ is the $(n-1)$-dimensional measure of the unit sphere $S^{n-1}.$ We can also define a more general ring domain.
\begin{definition}
    A \textit{ring domain} is a domain whose complement consists of a bounded and an unbounded component which we denote as $D_0$ and $D_1$, respectively. The boundary components of the ring are then $\partial D_0$ and $\partial D_1.$ We denote the ring domain by $R(D_0,D_1).$ 
\end{definition}

When considering ring domains in particular, there is a function due to V\"{a}is\"{a}l\"{a} \cite{vaisala2006lectures} which will be of particular use for estimates. 

\begin{definition}
    Given $r>0,$ we let $\Phi_n(r)$ be the set of all rings $A = R(D_0,D_1)$ in $S^n,$ where $D_0,D_1\subset S^n$ are the complementary components of the ring, with the following properties:
    \begin{itemize}
        \item[(a)] $D_0$ contains the origin and a point $a$ such that $\abs{a} = 1.$
        \item[(b)] $D_1$ contains $\infty$ and a point $b$ such that $\abs{b} = r.$ 
    \end{itemize}
    We denote
    \begin{align*}
        \mathcal{H}_n(r) & = \inf M(\Gamma_A),
    \end{align*}
    over all rings $A\in \Phi_n(r).$
\end{definition}

\begin{theorem} [\cite{vaisala2006lectures}, Theorem 30.1]
    \label{VHn}
    The function $\mathcal{H}_n:(0,\infty)\rightarrow \mathbb{R}$ has the following properties:
    \begin{itemize}
        \item[(a)] $\mathcal{H}_n$ is decreasing.
        \item[(b)] $\lim_{r\rightarrow \infty} \mathcal{H}_n(r) = 0.$
        \item[(c)] $\lim_{r\rightarrow 0} \mathcal{H}_n(r) = \infty.$
        \item[(d)] $\mathcal{H}_n(r)>0.$
    \end{itemize}
\end{theorem}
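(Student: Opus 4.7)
The plan is to reduce everything to the modulus of the Teichm\"{u}ller ring. For $r>0$, let $R_T(r) = S^n \setminus ([-e_1, 0] \cup [re_1, \infty])$ denote the Teichm\"{u}ller ring with parameter $r$, and set $\tau_n(r) = M(\Gamma_{R_T(r)})$. I first aim to prove $\mathcal{H}_n(r) = \tau_n(r)$, and then read off properties (a)--(d) from the geometry of $R_T(r)$.

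The upper bound $\mathcal{H}_n(r) \leq \tau_n(r)$ is immediate since $R_T(r) \in \Phi_n(r)$: its bounded complementary component contains $0$ and the unit point $-e_1$, while its unbounded component contains $\infty$ and the point $re_1$ of norm $r$. For the matching lower bound, I would invoke the spherical symmetrization theorem for ring path families: given any $A = R(D_0, D_1) \in \Phi_n(r)$, its symmetrization $A^* = R(D_0^*, D_1^*)$ along the $x_1$-axis satisfies $M(\Gamma_{A^*}) \leq M(\Gamma_A)$. By connectedness, $D_0$ meets every sphere $\{|x| = s\}$ for $s \in [0, 1]$ and $D_1$ meets every such sphere for $s \geq r$, so $D_0^* \supset [-e_1, 0]$ and $D_1^* \supset [re_1, \infty]$. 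Enlarging the complementary components of a ring only narrows it and increases its modulus, hence $M(\Gamma_{A^*}) \geq \tau_n(r)$, and taking infima gives $\mathcal{H}_n(r) \geq \tau_n(r)$.

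With $\mathcal{H}_n = \tau_n$ established, the four properties follow from direct geometric comparisons. For (a), given $r_1 < r_2$, every path in $\Gamma_{R_T(r_2)}$ truncated at its first entrance to the segment $[r_1 e_1, \infty]$ yields a subpath in $\Gamma_{R_T(r_1)}$, so $\tau_n(r_2) \leq \tau_n(r_1)$ by monotonicity of modulus under the subpath relation. For (b), once $r > 1$, every path in $\Gamma_{R_T(r)}$ contains a subpath crossing the round annulus $\{1 < |x| < r\}$, whence $\tau_n(r) \leq \omega_{n-1}(\log r)^{1-n} \to 0$. For (c), as $r \to 0$ the two Teichm\"{u}ller continua merge into the single segment $[-e_1, \infty]$ and the doubly connected complement degenerates to a simply connected region, forcing the modulus to infinity. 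For (d), for each fixed $r > 0$ the two components of $R_T(r)^c$ are disjoint non-degenerate compacta in $S^n$, so $\tau_n(r)$ is a finite strictly positive number.

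The principal obstacle is the identification step $\mathcal{H}_n = \tau_n$, which relies on the spherical symmetrization theorem for ring path families together with careful verification that the symmetrization of an arbitrary continuum in $\Phi_n(r)$ genuinely contains the expected Teichm\"{u}ller segment. If one prefers to bypass symmetrization, then (a), (b), and (d) admit direct proofs by exhibiting suitable test rings and comparing with a round annulus, but (c) appears to require either symmetrization or an explicit lower bound on the modulus of a ring with a vanishing bottleneck near a common limit point of the two components.
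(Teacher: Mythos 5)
First, a point of reference: the paper does not prove this statement. It is imported verbatim from V\"{a}is\"{a}l\"{a}'s lectures (Theorem 30.1 there), so there is no internal proof to compare against, and your argument has to stand on its own.

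On its own terms, the genuine gap is part (c). Even granting your identification $\mathcal{H}_n=\tau_n$, you still owe a lower bound showing $\tau_n(r)\to\infty$ as $r\to 0$; the sentence about the two continua merging so that the ring ``degenerates to a simply connected region, forcing the modulus to infinity'' is a description of the degeneration, not an estimate, and you concede in your closing paragraph that you do not have the needed bound. The missing ingredient is the standard elementary estimate (V\"{a}is\"{a}l\"{a} 10.12): if two sets each meet every sphere $\{\abs{x}=s\}$ for $s\in(a,b)$, then the modulus of the path family joining them is at least $c_n\log(b/a)$ with $c_n>0$. Applied directly to any $A=R(D_0,D_1)\in\Phi_n(r)$ with $r<1$ (connectedness forces $D_0$ to meet every sphere of radius $s\leq 1$ and $D_1$ every sphere of radius $s\geq r$), it gives $M(\Gamma_A)\geq c_n\log(1/r)$ uniformly over $\Phi_n(r)$, which is (c) with no symmetrization at all. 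Note also that (d) is a statement about an infimum, so pointwise positivity of each $M(\Gamma_A)$ is not enough; a uniform bound over $\Phi_n(r)$ is required (your symmetrization route does supply one, but your stated justification for (d) only addresses a single ring).

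A secondary issue is proportion: reducing everything to $\tau_n$ via Gehring's spherical symmetrization theorem is correct in outline but rests on a result far deeper than the theorem being proved, and one not established in the cited source. All four properties have elementary proofs. For (a), if $r_1<r_2$ then any connected $D_1$ containing $\infty$ and a point of norm $r_1$ must meet the sphere of radius $r_2$, so $\Phi_n(r_1)\subset\Phi_n(r_2)$ and the infimum over the larger class is smaller. For (b), the round annulus $\{1<\abs{x}<r\}$ itself belongs to $\Phi_n(r)$ when $r>1$, giving $\mathcal{H}_n(r)\leq\omega_{n-1}(\log r)^{1-n}$ with no reference to the Teichm\"{u}ller ring. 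For (c) and (d) with $r<1$, use the sphere-crossing estimate above; for (d) with $r\geq 1$, both complementary components of any $A\in\Phi_n(r)$ are continua of spherical diameter bounded below in terms of $r$ alone, and a Loewner-type estimate yields a uniform positive lower bound.
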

We now transition to discussing some properties and results of both quasiregular and quasiconformal maps in the setting of the quasihyperbolic metric. The following result uncovers a local geometric property for quasiregular maps.

\begin{theorem}[\cite{fletcher2016superattracting}, Theorem 1.1]
    \label{FN2014}
    Let $E$ a proper subdomain of $\mathbb{R}^n$ equipped with the Euclidean distance function. Further, suppose $x_0\in E$ and $f:E\rightarrow \mathbb{R}^n$ a non-constant quasiregular map. Then, there exist $C_0>1$ and $r_0>0$ such that, for all $0<T_0\leq 1$ and all $r\in (0,r_0),$
    \begin{align*}
        T_0^{-\mu} & \leq \frac{L_f(x_0,r)}{\ell_f(x_0,T_0r)} \leq \frac{C_0^2}{T_0^{-\nu}},
    \end{align*}
    where $\mu = (i(x_0,f)/K_I(f)^{1/(n-1)})$ and $\nu = (K_O(f)i(x_0,f))^{1/(n-1)}.$ Moreover, $C_0$ depends only on $n,K_O(f)$ and $i(x_0,f).$
\end{theorem}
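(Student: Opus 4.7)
The plan is to extract both estimates from modulus-of-path-family inequalities, applied to a Euclidean spherical ring centered at $x_0$ and to its image under $f$. The machinery of choice is the $K_I$- and $K_O$-inequalities for quasiregular maps (each adjusted by the local multiplicity factor, which equals $i(x_0,f)$ once $r$ is small enough), the standard spherical ring modulus formula, and the V\"{a}is\"{a}l\"{a} function $\mathcal{H}_n$ from Theorem \ref{VHn}. The exponents $\mu=i(x_0,f)/K_I(f)^{1/(n-1)}$ and $\nu=(K_O(f)\,i(x_0,f))^{1/(n-1)}$ should arise as the $(n-1)$-th roots that appear when one inverts these modulus inequalities.

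First I would fix $r_0>0$ small enough that $\overline{B(x_0,r_0)}\subset E$ and the maximum multiplicity $N(f,B(x_0,r_0))$ equals the local index $i(x_0,f)$; this is possible because $i(x_0,f)$ is attained on arbitrarily small neighborhoods of $x_0$. For $r\in(0,r_0)$ and $T_0\in(0,1]$, I would consider the ring $A=\{x:T_0r<|x-x_0|<r\}$ with path family $\Gamma$ joining the bounding spheres $S(x_0,T_0r)$ and $S(x_0,r)$. The spherical ring formula gives
\begin{equation*}
M(\Gamma)=\omega_{n-1}\bigl(\log(1/T_0)\bigr)^{1-n}.
\end{equation*}
Openness and discreteness of $f$ (Reshetnyak), together with the standard open-closed connectedness argument applied to $f(\overline{B(x_0,\rho)})\cap B(f(x_0),\ell_f(x_0,\rho))$, yield the containment $B(f(x_0),\ell_f(x_0,\rho))\subset f(B(x_0,\rho))$ for every $\rho\in(0,r_0]$. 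This lets me replace the image sphere sets by honest Euclidean spheres around $f(x_0)$.

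Next I would apply the two modulus inequalities for the quasiregular $f$: the $K_I$-type (Poletsky) inequality $M(\Gamma)\leq K_I(f)\,i(x_0,f)\,M(f\Gamma)$ and the $K_O$-type inequality $M(f\Gamma)\leq K_O(f)\,i(x_0,f)\,M(\Gamma)$, both valid in $B(x_0,r_0)$ thanks to the multiplicity bound. Combining the containments with the ring structure, one checks that every path in $f\Gamma$ has a subpath crossing a ring whose complementary components separate the bounded region near $f(x_0)$ (of Euclidean diameter comparable to $\ell_f(x_0,T_0r)$) from the exterior of a ball of radius comparable to $L_f(x_0,r)$. Applying Theorem \ref{VHn} to the normalization of this ring in $\Phi_n(L_f(x_0,r)/\ell_f(x_0,T_0r))$ gives a lower bound on $M(f\Gamma)$ in terms of $\mathcal{H}_n$. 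Comparing both sides produces
\begin{equation*}
\omega_{n-1}\bigl(\log(L_f(x_0,r)/\ell_f(x_0,T_0r))\bigr)^{1-n}\;\leq\; K_O(f)\,i(x_0,f)\,\omega_{n-1}\bigl(\log(1/T_0)\bigr)^{1-n},
\end{equation*}
and a symmetric inequality in the reverse direction via $K_I(f)$. Taking $(n-1)$-th roots of these comparisons and exponentiating produces clean power bounds of the form $T_0^{-\mu}$ and $C_0^2\,T_0^{-\nu}$ with $\mu$ and $\nu$ as stated; the constant $C_0$ absorbs both the comparability of $L_f^\ast(x_0,r)$ (the sup over the closed ball) with $L_f(x_0,r)$ and the normalization loss in passing through $\mathcal{H}_n$.

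The main obstacle I anticipate is Step 3: since $f$ need not be injective, $f(A)$ is not literally a ring, so one has to choose the bounded and unbounded complementary components in $S^n$ whose boundaries genuinely witness $\ell_f(x_0,T_0r)$ and $L_f(x_0,r)$, then verify that the image path family $f\Gamma$ is minorized by paths crossing this ring so that the inequality $M(f\Gamma)\leq M(\Gamma_R)$ is actually available. Once that separation is arranged using the $\ell_f$-ball containment above and Reshetnyak's openness, the modulus comparison closes the argument, but the accounting of multiplicative errors into the single factor $C_0^2$, and the verification that both exponents $\mu,\nu$ emerge cleanly after inverting $\mathcal{H}_n$, is where the argument is most delicate.
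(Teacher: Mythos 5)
There is nothing in the paper to compare your argument against: Theorem \ref{FN2014} is quoted verbatim from Fletcher--Nicks \cite{fletcher2016superattracting} (their Theorem 1.1) and the present paper gives no proof of it. So I can only assess your reconstruction on its own terms. Your overall strategy --- choose $r_0$ so that the multiplicity in $B(x_0,r_0)$ equals $i(x_0,f)$, compare the modulus of the path family joining the spheres of the ring $\{T_0r<|x-x_0|<r\}$ with the modulus of its image family via the $K_O$- and V\"{a}is\"{a}l\"{a}/Poletsky-type inequalities with multiplicity, and invert the spherical-ring formula --- is the standard route to exactly this kind of two-sided distortion estimate, and it is consistent with how the cited source argues.

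That said, the execution as written would not close. First, you have the two modulus inequalities pointing the wrong way: Poletsky/V\"{a}is\"{a}l\"{a} reads $M(f\Gamma)\le \bigl(K_I(f)/m\bigr)M(\Gamma)$ (the multiplicity \emph{helps}, sitting in the denominator), while the $K_O$-inequality reads $M(\Gamma)\le K_O(f)\,N\,M(f\Gamma)$; you have essentially interchanged them. Second, the minorization step (every path of $f\Gamma$ crossing a ring around $f(x_0)$) yields an \emph{upper} bound on $M(f\Gamma)$, not the lower bound you invoke. As a consequence, your displayed comparison $\omega_{n-1}\bigl(\log(L/\ell)\bigr)^{1-n}\le K_O(f)\,i(x_0,f)\,\omega_{n-1}\bigl(\log(1/T_0)\bigr)^{1-n}$, once solved for $L/\ell$ (recall $1-n<0$), gives $L_f(x_0,r)/\ell_f(x_0,T_0r)\ge T_0^{-1/\nu}$ with exponent $(K_O(f)i(x_0,f))^{-1/(n-1)}=1/\nu$, which is neither of the stated exponents; the correct pairing is $K_O$ with the upper bound for the ratio and $K_I$ (via V\"{a}is\"{a}l\"{a}, with $i(x_0,f)$ dividing) with the lower bound. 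Third, $f(S(x_0,T_0r))$ is not a sphere about $f(x_0)$ --- it lies between the radii $\ell_f(x_0,T_0r)$ and $L_f(x_0,T_0r)$ --- and the passage from the ring the image paths actually cross to the ratio $L_f(x_0,r)/\ell_f(x_0,T_0r)$ costs precisely two factors of the linear-dilatation bound $L_f(x_0,s)/\ell_f(x_0,s)\le C_0(n,K_O(f),i(x_0,f))$; that is where $C_0^2$ comes from, and your proposal only gestures at this bookkeeping. Finally, note the statement as transcribed in the paper has the upper bound $C_0^2/T_0^{-\nu}=C_0^2T_0^{\nu}$, which for small $T_0$ is incompatible with the lower bound $T_0^{-\mu}$; a correct modulus argument delivers $C_0^2T_0^{-\nu}$, so this is evidently a typo you should not try to reproduce.
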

A corollary to Rickman's well-known $K_O$-inequality for quasiregular mappings \cite[Thm II.2.4]{rickman2012quasiregular}  provides a stronger version of the $K_O$-inequality for quasiconformal mappings.
\begin{proposition}[\cite{rickman2012quasiregular}, Corollary II.2.7]
    \label{KO-inequality}
    If $f:D\rightarrow D'$ is quasiconformal and $\Gamma$ is a path family in $D,$ then 
    \begin{align*}
        M(\Gamma) & \leq K_O(f)M(f(\Gamma)),
    \end{align*}
    where $f(\Gamma)$ denotes the image of $\Gamma$ under $f.$
\end{proposition}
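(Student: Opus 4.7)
The plan is to prove Proposition \ref{KO-inequality} directly from the analytic definition of quasiconformality and the definition of modulus, essentially pulling back admissible densities from $f(\Gamma)$ to $\Gamma$ and exploiting injectivity to avoid any multiplicity factor.

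First I would fix an arbitrary admissible density $\rho' \in F(f(\Gamma))$ and build a candidate density $\rho$ for $\Gamma$ by the natural pullback
\begin{equation*}
    \rho(x) = \rho'(f(x))\,\lvert f'(x)\rvert
\end{equation*}
for $x \in D$, and $\rho = 0$ outside $D$. To verify that $\rho \in F(\Gamma)$, I would pick a rectifiable $\gamma \in \Gamma$ and show, using the $\mathrm{ACL}^n$ regularity of $f$ together with the usual path-integral transformation formula (V\"ais\"al\"a, Section 5), that
\begin{equation*}
    \int_\gamma \rho\,ds \geq \int_{f\circ \gamma} \rho'\,ds \geq 1.
\end{equation*}
This is the step where one must handle the non-rectifiability of $f \circ \gamma$ and the possibility that $f$ fails to be absolutely continuous on $\gamma$ on a negligible exceptional family, invoking Fuglede's theorem to discard an exceptional subfamily of modulus zero.

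Next I would estimate $\int_{\R^n} \rho^n\, dm$. Using the pointwise distortion inequality $\lvert f'(x)\rvert^n \leq K_O(f) J_f(x)$ from the analytic definition,
\begin{equation*}
    \int_{\R^n} \rho^n\, dm \leq K_O(f) \int_D \rho'(f(x))^n J_f(x)\, dm(x).
\end{equation*}
Now the crucial use of the hypothesis that $f$ is quasiconformal, hence injective, enters: by the change of variables formula for quasiconformal mappings (which has multiplicity $N(f,D) = 1$ precisely because $f$ is injective), the right-hand side equals $K_O(f)\int_{f(D)} (\rho')^n\, dm$. Thus
\begin{equation*}
    M(\Gamma) \leq \int_{\R^n} \rho^n\, dm \leq K_O(f)\int_{f(D)} (\rho')^n\, dm,
\end{equation*}
and taking the infimum over all admissible $\rho'$ for $f(\Gamma)$ gives the claim.

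I expect the main obstacle to be the measure-theoretic bookkeeping in the first step: justifying the path transformation formula requires that $f$ be absolutely continuous on almost every path in $\Gamma$ in the modulus sense. This is where one leans on Fuglede's theorem, and it is also the point where the quasiregular (as opposed to merely quasiconformal) generalization produces a factor of $N(f,D)$. Since here we assume injectivity of $f$, that multiplicity collapses to $1$ and the clean inequality in the statement emerges as a direct corollary of Rickman's $K_O$-inequality \cite[Thm II.2.4]{rickman2012quasiregular}, which is why no separate argument beyond invoking that result is really needed in the paper.
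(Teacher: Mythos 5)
Your sketch is correct, but note that the paper does not prove this proposition at all: it is imported verbatim as Corollary II.2.7 of Rickman's monograph, stated only so that it can be applied in Case 2a of Theorem \ref{QCHweakqs}. What you have written out is essentially the standard proof that sits behind Rickman's result: pull back an admissible $\rho'\in F(f(\Gamma))$ to $\rho(x)=\rho'(f(x))\,\abs{f'(x)}$, verify admissibility for $\Gamma$ after discarding (via Fuglede's theorem) the modulus-zero subfamily of paths on which $f$ fails to be absolutely continuous, and then use $\abs{f'(x)}^n\leq K_O(f)J_f(x)$ together with the change of variables formula. Your emphasis on where injectivity enters is exactly right: Rickman's Theorem II.2.4 gives $M(\Gamma)\leq K_O(f)\,N(f,A)\,M(f(\Gamma))$ for quasiregular $f$, and the quasiconformal corollary is precisely the case where the multiplicity $N(f,D)$ collapses to $1$. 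Two small bookkeeping points if you were to write this in full: the pullback density is only defined almost everywhere (where $f$ is differentiable), so one must pass to a Borel representative to land in $F(\Gamma)$ as the paper defines it; and since admissibility of $\rho'$ only constrains rectifiable images, you should observe that for $\gamma$ outside the Fuglede exceptional family the image $f\circ\gamma$ of a rectifiable path is itself rectifiable, so the bound $\int_\gamma\rho\,ds\geq\int_{f\circ\gamma}\rho'\,ds\geq 1$ genuinely applies. Neither point affects the validity of your argument.
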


Focusing on quasiconformal maps allows us more insight into the behavior of the map locally and globally. In the following result, Gehring and Osgood \cite{gehring1976quasiconformally} reveal that quasiconformal maps on domains equipped with the quasihyperbolic metric are globally Lipschitz but locally H\"{o}lder.

\begin{theorem} 
    \label{GO}
    Let $X,Y\subset \mathbb{R}^n$ be domains and $f:X\rightarrow Y$ an $M$-quasiconformal map. Then, there exist constants $C_1,C_2>0$ depending only on $n$ and $M$ such that
    \begin{align*}
        k_{Y}(f(x_1),f(x_2)) & \leq C_1\max\{k_X(x_1,x_2), k_X(x_1,x_2)^{\frac{1}{\alpha}}\}
    \end{align*}
    and
    \begin{align*}
        k_{Y}(f(x_1),f(x_2)) & \geq C_2\min\{k_X(x_1,x_2), k_X(x_1,x_2)^\alpha\}
    \end{align*}
    for all $x_1,x_2\in X,$ where $\alpha = M^{\frac{1}{n-1}}.$
\end{theorem}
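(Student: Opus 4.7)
The plan is to first establish a local H\"{o}lder estimate for $f$ with exponent $1/\alpha$, and then globalize this by chaining the estimate along a quasihyperbolic geodesic. The max in the stated inequality arises naturally depending on whether the geodesic fits inside a single local ball or must be subdivided into many pieces.

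\textbf{Local step.} I would first show there exist $c,C>0$ depending only on $n$ and $M$ such that $k_X(x_1,x_2)\le c$ forces $k_Y(f(x_1),f(x_2))\le C\,k_X(x_1,x_2)^{1/\alpha}$. Choosing $c=\log 2$ guarantees that $x_1$ and $x_2$ both lie in the Euclidean ball $B(x_1,\rho_X(x_1)/2)$, where $\rho_X(x):=d(x,\partial X)$; on such a ball the quasihyperbolic metric is bi-Lipschitz equivalent to $|x_1-x_2|/\rho_X(x_1)$. It therefore suffices to prove the Euclidean distortion estimate $|f(x_1)-f(x_2)|/\rho_Y(f(x_1))\le C'(|x_1-x_2|/\rho_X(x_1))^{1/\alpha}$. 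This is extracted by forming the spherical ring $A$ in $X$ with inner component $\overline{B}(x_1,|x_1-x_2|)\ni x_2$ and outer component $S^n\setminus B(x_1,\rho_X(x_1))\supset \partial X$, whose modulus equals $\omega_{n-1}(\log(\rho_X(x_1)/|x_1-x_2|))^{1-n}$. Applying Proposition \ref{KO-inequality} to $f^{-1}$ (which is also $M$-quasiconformal) gives $M(\Gamma_{f(A)})\le M\cdot M(\Gamma_A)$, and after a M\"{o}bius normalization that moves $f(A)$ into the framework of the definition of $\Phi_n$, Theorem \ref{VHn} yields a lower bound on $M(\Gamma_{f(A)})$ in terms of $\mathcal{H}_n$ applied to the ratio $\rho_Y(f(x_1))/|f(x_1)-f(x_2)|$. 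Combining these and inverting $\mathcal{H}_n$ using the asymptotic form coming from the spherical-ring bound produces the exponent $1/\alpha=M^{-1/(n-1)}$ exactly, with the multiplicative $M$ balancing the power $1-n$.

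\textbf{Globalization.} For arbitrary $x_1,x_2\in X$, Proposition \ref{geodesicspace} supplies a quasihyperbolic geodesic $\gamma$, which by Remark \ref{quasihypisquasconvex} may be subdivided into $N:=\lceil k_X(x_1,x_2)/c\rceil$ subarcs by points $x_1=y_0,y_1,\ldots,y_N=x_2$ with $k_X(y_i,y_{i+1})\le c$ and $\sum_i k_X(y_i,y_{i+1})=k_X(x_1,x_2)$. The local step together with the triangle inequality gives $k_Y(f(x_1),f(x_2))\le C\sum_{i=0}^{N-1}k_X(y_i,y_{i+1})^{1/\alpha}\le CNc^{1/\alpha}$, since $1/\alpha\le 1$ and each summand is at most $c^{1/\alpha}$. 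If $k_X(x_1,x_2)\le c$, then $N=1$ and the bound is $Ck_X(x_1,x_2)^{1/\alpha}$. Otherwise $N\le 2k_X(x_1,x_2)/c$, so the bound reduces to $2Cc^{1/\alpha-1}k_X(x_1,x_2)$, linear in $k_X$. Taking $C_1$ to be the maximum of these two prefactors yields the upper estimate as stated.

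\textbf{Lower bound and main obstacle.} The lower estimate follows by applying the upper estimate just proved to the $M$-quasiconformal inverse $f^{-1}:Y\to X$ and inverting the resulting inequality case by case depending on whether $k_Y(f(x_1),f(x_2))$ is above or below $1$; the constant $C_2$ can be read off from $C_1$ up to a power of $\alpha$. The main obstacle is the local step: one must carefully arrange the ring $A$ and its M\"{o}bius normalization so it fits the $\Phi_n$ framework defining $\mathcal{H}_n$, and then extract the sharp exponent $1/\alpha$ from the interplay between the $K_O$-inequality and the asymptotics of $\mathcal{H}_n^{-1}$. The chaining argument itself is essentially bookkeeping using the $1$-quasiconvexity of $(X,k_X)$ noted in Remark \ref{quasihypisquasconvex}.
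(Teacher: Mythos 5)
The paper does not actually prove this statement: Theorem \ref{GO} is imported from Gehring and Osgood's 1979 paper on uniform domains and the quasihyperbolic metric (despite the in-text citation pointing to Gehring--Palka), so there is no in-paper argument to compare yours against. Your reconstruction follows the original Gehring--Osgood strategy in both of its stages: a local H\"older estimate with exponent $1/\alpha = M^{1/(1-n)}$ extracted from a modulus comparison between a round ring about $x_1$ and its image, followed by chaining along a quasihyperbolic geodesic, with the linear term in the $\max$ coming from the count of subarcs. The chaining and the case split producing $\max\{k_X(x_1,x_2),k_X(x_1,x_2)^{1/\alpha}\}$ are correct, as is deriving the lower bound by applying the upper bound to the $M$-quasiconformal inverse and inverting case by case.

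Two caveats. First, a minor one: with $c=\log 2$ the standard inequality $k_X(x_1,x_2)\ge \log\bigl(1+|x_1-x_2|/d(x_1,\partial X)\bigr)$ only yields $|x_1-x_2|\le d(x_1,\partial X)$, not $\le d(x_1,\partial X)/2$; take $c=\log(3/2)$, which changes nothing else. Second, and more substantively, the local step as you describe it requires a \emph{quantitative} lower bound of the form $\mathcal{H}_n(r)\ge \omega_{n-1}\bigl(\log(\lambda_n r)\bigr)^{1-n}$ for $r$ large in order to invert $\mathcal{H}_n$ and read off the exponent; Theorem \ref{VHn} as quoted in the paper supplies only the qualitative facts that $\mathcal{H}_n$ is positive, decreasing, and tends to $0$ at infinity. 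The quantitative bound is standard (it follows from the capacity estimates for the Gr\"otzsch and Teichm\"uller rings in V\"ais\"al\"a's book), but it is the one ingredient your sketch invokes without stating, and it is precisely where the exponent $M^{1/(1-n)}$ is produced; you should also note that for $x_1,x_2$ with $|f(x_1)-f(x_2)|$ comparable to $d(f(x_1),\partial Y)$ the ratio $r$ may fail to be large, a regime that is absorbed into the multiplicative constant by the monotonicity and positivity of $\mathcal{H}_n$. With those points made explicit the argument is complete.
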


\subsection{Quasisymmetry and Weak Quasisymmetry} 
Recall that a homeomorphism between metric spaces $f:(X,d_X)\rightarrow (Y,d_Y)$ is $L$-bi-Lipschitz if there exists $L\geq 1$ such that for all $x,y\in X$ we have $\frac{1}{L}d_X(x,y)\leq d_Y(f(x),f(y)) \leq Ld_X(x,y).$ Bi-Lipschitz maps can only distort distances by a bounded amount. To consider a more general class of mappings, we want to allow bounded distortion of relative distances instead.

\begin{definition}
    Let $f:(X,d_X)\rightarrow (Y,d_Y)$ be a homeomorphism. Then $f$ is said to be $\eta$\textit{-quasisymmetric} if there exists an increasing homeomorphism $\eta:[0,\infty)\rightarrow [0,\infty)$ such that for every distinct triple of points $x,y,z\in X,$ we have
    \begin{align*}
        \frac{d_Y(f(x),f(y))}{d_Y(f(x),f(z))} & \leq \eta\left(\frac{d_X(x,y)}{d_X(x,z)}\right).
    \end{align*}
    
\end{definition}

There is a weaker version of the above definition. Proving that the weaker version holds for a function can potentially act as a stepping stone to showing it is $\eta$-quasisymmetric. This is because in certain settings \cite[Thm. 10.19]{heinonen2001lectures}, the two definitions do turn out to be equivalent. 

\begin{definition}
    Let $f:(X,d_X)\rightarrow (Y,d_Y)$ be a map. We say $f$ is \textit{weakly $H$-quasisymmetric} if there exists $H\geq 1$ such that for every distinct triple $x,y,z\in X,$ 
    \begin{align*}
        d_Y(f(x),f(y)) & \leq Hd_Y(f(x),f(z)),
    \end{align*}
    whenever $d_X(x,y)\leq d_X(x,z).$
\end{definition}

However, this version of the weaker definition is not always ideal to work with. The following proposition provides an equivalent definition to weak quasisymmetry.

\begin{proposition}
    \label{weakqsequiv}
    Let $f:(X,d_X)\rightarrow (Y,d_Y)$ be a map. Then $f$ is weakly $H$-quasisymmetric if and only if for every $x\in X$ and $r>0$ we have $f(B_X(x,r))\subset B_Y(f(x),Hl_f(x,r)).$
\end{proposition}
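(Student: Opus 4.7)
The plan is to prove the equivalence by unpacking both conditions directly at the scale $r = d_X(x,z)$, since each side of the equivalence is essentially a statement that the maximum distortion on a sphere around $x$ (at some radius) is controlled by a factor $H$ times the minimum distortion on that same sphere. No deep geometric machinery is needed; the argument is a direct translation between the pointwise three-point condition and its sup/inf formulation.

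For the forward direction, suppose $f$ is weakly $H$-quasisymmetric, and fix $x\in X$ and $r>0$. Take any $y\in B_X(x,r)$, so $d_X(x,y)<r$. For each $z\in X$ on the sphere $d_X(x,z)=r$, the hypothesis $d_X(x,y)\leq d_X(x,z)$ holds, so weak quasisymmetry gives
\begin{align*}
d_Y(f(x),f(y)) &\leq H\,d_Y(f(x),f(z)).
\end{align*}
Taking the infimum over all such $z$ yields $d_Y(f(x),f(y))\leq H\,\ell_f(x,r)$, so $f(y)\in B_Y(f(x),H\ell_f(x,r))$, as required.

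For the reverse direction, assume the inclusion holds for every $x$ and every $r>0$. Given distinct $x,y,z\in X$ with $d_X(x,y)\leq d_X(x,z)$, set $r=d_X(x,z)$. Then $y$ lies in the ball $B_X(x,r)$ (after the open/closed ball reconciliation discussed below), so by hypothesis $f(y)\in B_Y(f(x),H\ell_f(x,r))$; that is, $d_Y(f(x),f(y))\leq H\ell_f(x,r)$. Since $z$ lies on the sphere of radius $r$ around $x$, the definition of $\ell_f$ gives $\ell_f(x,r)\leq d_Y(f(x),f(z))$, and combining these yields $d_Y(f(x),f(y))\leq H\,d_Y(f(x),f(z))$, establishing the weak quasisymmetry condition.

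The main point requiring care is the convention for $B$ (open vs.\ closed) and the matching strict versus non-strict inequalities; if $B$ denotes open balls, one handles the boundary case $d_X(x,y)=d_X(x,z)$ by passing to a limit via radii $r'>r$ and using continuity of the metric, or equivalently by noting that for any $s$ with $d_X(x,y)<s\leq r$ the forward argument gives $d_Y(f(x),f(y))\leq H\ell_f(x,s)$. A secondary subtle point is the possible non-existence of points $z$ with $d_X(x,z)=r$, but in the quasihyperbolic setting of interest $(X,k_X)$ is a geodesic space (Proposition \ref{geodesicspace}), so every sphere of radius $r\leq\mathrm{diam}(X)$ is non-empty and the infimum defining $\ell_f(x,r)$ is taken over a non-empty set.
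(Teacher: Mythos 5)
Your argument is correct and follows essentially the same route as the paper's: in each direction one works at the scale $r=d_X(x,z)$ and compares $d_Y(f(x),f(y))$ with $H\ell_f(x,r)$. The paper sidesteps your open-versus-closed-ball concern by simply passing to closed balls in the converse (and in the forward direction it picks a specific $z$ attaining $\ell_f(x,r)$, where you more carefully take an infimum over the sphere), so the edge-case discussion at the end of your write-up is the only real point of divergence.
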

\begin{proof}
    To begin, let us assume $f$ is weakly $H$-quasisymmetric. Let $x\in X, r>0,$ and $v\in f(B_X(x,r)).$ Then, there exists $y\in X$ such that $f(y) = v.$ Choose $z\in X$ such that $d(x,z) = r$ and $d_Y(f(x),f(z)) = \ell_f(x,r).$ If $d_X(x,y)\leq r = d_X(x,z),$ weak $H$-quasisymmetry of $f$ implies
    \begin{align*}
        d_Y(f(x),f(y)) &\leq Hd_Y(f(x),f(z)) = H\ell_f(x,r).
    \end{align*}
    Hence, $v\in B_Y(f(x),H\ell_f(x,r)).$\\
    Conversely, let $x,y,z\in X$ be distinct points such that $d_X(x,y)\leq d_X(x,z).$ Set $r = d_X(x,z).$ Then, $x,y\in \overline{B_X(x,r)}.$  By assumption, $f(x),f(y)\in f(\overline{B_X(x,r)})\subset \overline{B_Y(f(x),H\ell_f(x,r))},$ so
    \begin{align*}
        d_Y(f(x),f(y))& \leq H\ell_f(x,r).
    \end{align*}
     As $d_Y(f(x),f(z))\geq \ell_f(x,r),$ we have
    \begin{align*}
        d_Y(f(x),f(y))& \leq H\ell_f(x,r)\leq Hd_Y(f(x),f(z)),
    \end{align*}
    so $f$ is weakly $H$-quasisymmetric.
\end{proof}

\subsection{Families of Continuous Functions}
Let $X,Y$ be subsets of $S^n$ and $d_X,d_Y$ conformal distance functions. We define $C(X,Y)$ as the set of all continuous functions from $X$ to $Y.$ Now, we can make $C(X,Y)$ into a metric space with the distance function defined the following way: let $(K_i)_{i=0}^\infty$ be a compact exhaustion of $X$ and for $f,g\in C(X,Y)$ set,
\begin{align*}
    d_i(f,g) & = \sup\{d_Y(f(x),g(x)):x\in K_i\}
\end{align*}
and
\begin{align*}
    d_{X,Y}(f,g) & = \sum_{i=1}^\infty \frac{1}{2^i}\left(\frac{d_i(f,g)}{1+d_i(f,g)}\right).
\end{align*}
Then, $d_X(f_m,f)\rightarrow 0$ if and only if $f_m\rightarrow f$ uniformly on compact subsets of $X.$
The topology which arises from this distance function is called the topology of uniform convergence on compact sets.

It will be of importance to discuss the relative compactness of families of functions. We say that a family of functions $\mathcal{F}\subset C(X,Y)$ is relatively compact in $C(X,Y)$ if its closure is compact. Just above we saw that $C(X,Y)$ can be viewed as a metric space, and sequential compactness coincides with compactness in metric spaces. Therefore, $\mathcal{F}$ is relatively compact if and only any sequence $f_m\in \mathcal{F}$ has a subsequence which converges uniformly on compact subsets to an element of $C(X,Y).$ This concept will be key when combined with ideas from the following definitions and results.

\begin{definition}
    Let $\omega:[0,\infty)\rightarrow [0,\infty)$ be a continuous increasing function with $\omega(0) = 0.$ We call a map $f:X\rightarrow Y$ $\omega$-continuous if $f$ has \textit{modulus of continuity} $\omega,$ that is, if
    \begin{align*}
        d_Y(f(x),f(y)) & \leq \omega(d_X(x,y))
    \end{align*}
    for all $x,y\in X.$
\end{definition}

\begin{definition}
    A family $\mathcal{F}\subset C(X,Y)$ is called:
    \begin{itemize}
        \item[(i)]uniformly $\omega$-continuous if there exists $L>0$ such that for all $x,y\in X$ and all $f\in \mathcal{F},$
        \begin{align*}
            d_Y(f(x),f(y)) & \leq L\omega(d_X(x,y));
        \end{align*}

        \item[(ii)]uniformly $\omega$-continuous on compact sets if for each compact set $E\subset X,$ there exists $L>0$ such that for all $x,y\in E$ and all $f\in \mathcal{F},$
        \begin{align*}
            d_Y(f(x),f(y))&\leq L(d_X(x,y));
        \end{align*}

        \item[(iii)] locally uniformly $\omega$-continuous if for each $x_0\in X,$ there exists $r,L>0$ such that for all $x,y\in B_X(x_0,r)$ and all $f\in \mathcal{F},$
        \begin{align*}
            d_Y(f(x),f(y)) & \leq L\omega(d_X(x,y)).
        \end{align*}
    \end{itemize}
    \end{definition}
    
    The following proposition tells us that parts (ii) and (iii) of the above definition are equivalent.
    \begin{proposition}[\cite{fletcher2024normal}, Proposition 2.6]
        \label{FNprop2.6}
        Let $X,Y$ subdomains of $S^n$ equipped distance functions $d_X,d_Y,$ respectively both arising from conformal metrics. A family $\mathcal{F}\subset C(X,Y)$ is locally uniformly $\omega$-continuous if and only if it is uniformly $\omega$-continuous on compact sets. 
    \end{proposition}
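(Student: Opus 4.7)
The plan is to prove each direction of the biconditional separately. The easy implication, that uniform $\omega$-continuity on compact sets forces local uniform $\omega$-continuity, is immediate: for any $x_0 \in X$, openness of $X$ in $S^n$ furnishes $r > 0$ with $\overline{B_X(x_0, r)}$ compact in $X$, and applying the compact-set hypothesis to this closed ball produces a single constant $L$ valid for the whole family, witnessing local uniform $\omega$-continuity at $x_0$ with radius $r$.

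For the converse, assume $\mathcal{F}$ is locally uniformly $\omega$-continuous and fix a compact $E \subset X$. The first step is to enlarge $E$ to a compact \emph{path-connected} set $E'$ with $E \subset E' \subset X$; this is possible because $X$, being an open connected subset of $S^n$, is path-connected and locally path-connected, so one can cover $E$ by finitely many open balls with closures in $X$ and tether them to a common basepoint via paths in $X$, then take the (still compact) union of the closed balls with the path images. Next, use the local hypothesis to cover $E'$ by balls $B_X(x_i, r_i)$ on each of which $\mathcal{F}$ is uniformly $\omega$-continuous with constant $L_i$, extract a finite subcover indexed by $i = 1, \ldots, N$, set $L_0 = \max_i L_i$, and let $\delta > 0$ be a Lebesgue number for this finite cover of $E'$.

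For $y, z \in E$ with $d_X(y, z) < \delta$, the Lebesgue number property places both points in a common $B_X(x_i, r_i)$, yielding $d_Y(f(y), f(z)) \leq L_0 \omega(d_X(y, z))$ at once. The main obstacle is the case $d_X(y, z) \geq \delta$. Here I would exploit the fact that the intersection graph of the balls $B_X(x_i, r_i)$ (keeping only those meeting $E'$) must be connected, since otherwise one could split the path-connected set $E'$ into two disjoint relatively open pieces. Consequently any two points of $E$ can be joined by a chain of at most $N$ consecutive pairs each lying in a common covering ball, and summing the local estimates yields a uniform bound $d_Y(f(y), f(z)) \leq N L_0\, \omega(2 \max_i r_i) =: C$ independent of $f \in \mathcal{F}$. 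Since $\omega$ is continuous and increasing with $\omega(0) = 0$, we have $\omega(\delta) > 0$, so rewriting $C \leq (C / \omega(\delta))\, \omega(d_X(y, z))$ and combining with the short-distance case gives the desired uniform $\omega$-continuity on $E$ with constant $L = \max(L_0, C/\omega(\delta))$.
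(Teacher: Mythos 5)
The paper does not actually prove this proposition; it is imported verbatim from \cite{fletcher2024normal} (Proposition 2.6), so there is no in-paper argument to compare against. Your proof is correct and self-contained. The forward direction is the standard observation, and for the converse the two ingredients that make the chaining work are both present and correctly justified: enlarging $E$ to a compact \emph{connected} $E'\subset X$ so that the intersection graph of a finite subcover is forced to be connected, and splitting into the cases $d_X(y,z)<\delta$ (handled by the Lebesgue number) and $d_X(y,z)\geq\delta$ (handled by a uniform bound $C$ that is absorbed into $(C/\omega(\delta))\,\omega(d_X(y,z))$ because $\omega$ is increasing). Two small points deserve an explicit sentence in a written-out version. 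First, the facts that small closed $d_X$-balls are compact subsets of $X$ and that open $d_X$-balls are connected both rely on $d_X$ being a length metric with continuous, strictly positive density; this is what legitimizes the choice of $r$ in the easy direction and the connectedness of $E'$ (note that only connectedness of $E'$, not path-connectedness, is actually used in the intersection-graph argument). Second, $\omega(\delta)>0$ requires $\omega$ to be strictly increasing; in the degenerate case where $\omega$ vanishes on an interval, local uniform $\omega$-continuity would force every $f\in\mathcal{F}$ to be locally constant, so the claim trivializes and no harm is done.
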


    Recall the following definition.
    \begin{definition}
        Let $X,Y$ subdomains of $S^n$ with respective distance functions $d_X,d_Y$ each arising from a conformal metric. Let $\mathcal{F}\subset C(X,Y).$
        \begin{itemize}
            \item[(i)] $\mathcal{F}$ is \textit{equicontinuous at} $x_0\in X$ if for every $\varepsilon >0$ there exists $\delta>0$ such that\\ $d_Y(f(x),f(x_0))<\varepsilon$ whenever $d_X(x,x_0)<\delta$ and $f\in \mathcal{F}.$

            \item[(ii)]$\mathcal{F}$ is \textit{equicontinuous on} $X$ if it is equicontinuous at each point $x_0\in X.$ 
        \end{itemize}
    \end{definition}


    We now consider a version of the Arzela-Ascoli theorem for our setting (\cite[Theorem 47.1]{munkrestopology}).
    
    \begin{theorem}
        \label{FNTheorem2.8}
        Let $\mathcal{F}$ be a family of continuous functions from a locally compact Hausdorff metric space $X$ to a metric space $Y.$ Then $\mathcal{F}$ is relatively compact in $C(X,Y)$ if and only if
        \begin{itemize}
            \item[(i)]the family $\mathcal{F}$ is equicontinuous on $X$ and

            \item[(ii)]for every $x\in X,$ the orbit $\mathcal{F}(x) = \{f(x):f\in \mathcal{F}\}$ is relatively compact in $Y.$ 
        \end{itemize}
    \end{theorem}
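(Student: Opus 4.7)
The plan is to prove both directions using the classical Arzela--Ascoli argument, adapted to the topology of uniform convergence on compact sets described earlier in the section.

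For the forward implication, assume $\mathcal{F}$ has compact closure in $C(X,Y)$. Conclusion (ii) is immediate: for each fixed $x \in X$ the evaluation functional $\mathrm{ev}_x : C(X,Y) \to Y$ defined by $\mathrm{ev}_x(f) = f(x)$ is continuous (since $\{x\}$ is compact), so $\mathrm{ev}_x(\overline{\mathcal{F}})$ is compact in $Y$ and contains the orbit $\mathcal{F}(x)$. For equicontinuity (i), I would argue by contradiction. Suppose $\mathcal{F}$ fails to be equicontinuous at some $x_0 \in X$. Then there exist $\varepsilon > 0$, points $x_m \to x_0$, and functions $f_m \in \mathcal{F}$ with $d_Y(f_m(x_m), f_m(x_0)) \geq \varepsilon$. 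Using local compactness, choose a compact neighborhood $K$ of $x_0$; it contains all but finitely many of the $x_m$. By relative compactness extract a subsequence $(f_{m_k})$ converging uniformly on $K$ to a continuous limit $f$. Continuity of $f$ at $x_0$ combined with uniform convergence on $K$ forces $d_Y(f_{m_k}(x_{m_k}), f_{m_k}(x_0)) \to 0$, which contradicts the lower bound.

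For the converse, given a sequence $(f_m) \subset \mathcal{F}$ I would produce a subsequence converging uniformly on compacta. Choose a compact exhaustion $(K_i)_{i \geq 1}$ of $X$, which exists by local compactness (and in the applications of interest by second countability), and a countable dense subset $D \subset X$ obtained as the union of countable dense subsets of each $K_i$. By hypothesis (ii), for each $d \in D$ the sequence $\{f_m(d)\}$ lies in a relatively compact subset of $Y$, so a standard diagonal extraction produces a subsequence $(f_{m_k})$ which converges pointwise on $D$. Equicontinuity then upgrades this to uniform convergence on each $K_i$ via the usual three-term estimate: cover $K_i$ by finitely many equicontinuity balls whose centers lie in $D$ and bound $d_Y(f_{m_j}(x), f_{m_k}(x))$ by pointwise convergence at a nearby center together with two equicontinuity contributions. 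The diagonal subsequence therefore converges uniformly on compacta to a continuous limit $f \in C(X,Y)$, which gives relative compactness of $\mathcal{F}$.

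The main obstacle is really technical rather than conceptual: securing a countable dense subset $D$ compatible with the compact exhaustion, since without countability the diagonal extraction fails. This is automatic once $X$ is a locally compact Hausdorff metric space, as each $K_i$ is a compact metric space and hence separable. With $D$ in hand the rest is the classical Arzela--Ascoli machinery, and for that reason the natural move in the paper is simply to invoke Theorem 47.1 of Munkres rather than reproduce the argument.
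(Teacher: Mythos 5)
The paper does not prove this statement at all: it is quoted as a known form of the Arzel\`a--Ascoli theorem and justified solely by the citation to \cite[Theorem 47.1]{munkrestopology}, so there is no in-paper argument to compare against. Your sketch supplies the standard proof, and it is essentially correct. The forward direction (continuity of evaluation maps for (ii), and the contradiction argument on a compact neighbourhood for (i)) is the classical one. In the converse, two small points deserve a word. First, a locally compact Hausdorff metric space need not admit a compact exhaustion unless it is also $\sigma$-compact (or second countable); you flag this, and it is harmless here because the paper has already fixed a compact exhaustion $(K_i)$ in order to metrize $C(X,Y)$, and all domains of interest are subdomains of $S^n$. Second, your three-term estimate shows the diagonal subsequence is uniformly Cauchy on each $K_i$, but $Y$ is not assumed complete, so to produce the limit $f(x)$ at an arbitrary $x$ you should invoke hypothesis (ii) once more: the values $f_{m_k}(x)$ lie in the compact set $\overline{\mathcal{F}(x)}$, which is complete, so the Cauchy sequence converges. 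With those remarks your argument is a complete and self-contained substitute for the citation, which is all the paper itself offers.
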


    Viewing $C(X,Y)$ as a metric space allows us to reinterpret the Arzela-Ascoli theorem for families $\mathcal{F}\subset C(X,Y)$ which are locally uniformly $\omega$-continuous. This modified version of Arzela-Ascoli provides us with geometric insight that will be of use in Section $4.$
 
    \begin{theorem}[\cite{fletcher2024normal},Theorem 2.9]
        \label{FNTheorem2.9}
        Let $X,Y$ be subdomains of $S^n$ with conformal metrics and associated distance functions $d_X,d_Y$ respectively and suppose the metric on $Y$ is complete. Let $\mathcal{F}\subset C(X,Y)$ be locally uniformly $\omega$-continuous. Then $\mathcal{F}$ is relatively compact in $C(X,Y)$ if and only if there exists $x_0\in X$ such that $\mathcal{F}(x_0) = \{f(x_0):f\in \mathcal{F}\}$ is relatively compact in $Y.$
    \end{theorem}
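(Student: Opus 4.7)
My plan is to prove the two implications separately, deducing the reverse direction from the Arzela-Ascoli theorem (Theorem \ref{FNTheorem2.8}). For the forward direction, I would observe that for any $x_0 \in X$ the evaluation map $\operatorname{ev}_{x_0}\colon C(X,Y)\to Y$, $f\mapsto f(x_0)$, is continuous because uniform convergence on compact sets implies pointwise convergence. Hence if $\overline{\mathcal{F}}$ is compact, then $\operatorname{ev}_{x_0}(\overline{\mathcal{F}})$ is compact in $Y$ and contains $\mathcal{F}(x_0)$, so the latter is relatively compact.

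For the reverse direction, I would verify the two hypotheses of Theorem \ref{FNTheorem2.8}: equicontinuity of $\mathcal{F}$ on $X$, and relative compactness of the orbit $\mathcal{F}(x)$ for every $x\in X$. Equicontinuity at a point $x_0$ is immediate from local uniform $\omega$-continuity: given $\varepsilon>0$ I would choose $\delta<r$ with $L\omega(\delta)<\varepsilon$, where $r,L$ come from the local uniform modulus at $x_0$, and the continuity of $\omega$ at $0$ makes this possible. To obtain relative compactness of $\mathcal{F}(x)$ for arbitrary $x\in X$, I would use that a subdomain of $S^n$ is path-connected: joining $x_0$ to $x$ by a path whose image $E\subset X$ is compact, Proposition \ref{FNprop2.6} provides a single constant $L>0$ with
\[
    d_Y(f(x),f(x_0)) \leq L\omega(d_X(x,x_0))
\]
for every $f\in\mathcal{F}$. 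Consequently $\mathcal{F}(x)$ lies inside the $d_Y$-bounded set $B_Y\bigl(\overline{\mathcal{F}(x_0)},L\omega(d_X(x,x_0))\bigr)$, since $\overline{\mathcal{F}(x_0)}$ is compact.

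The main obstacle is upgrading $d_Y$-boundedness of $\mathcal{F}(x)$ to relative compactness, and this is where the completeness hypothesis on $(Y,d_Y)$ is essential. Because a conformal metric on a subdomain of $S^n$ is a Riemannian metric, the Hopf-Rinow theorem would yield the Heine-Borel property: closed bounded sets in $(Y,d_Y)$ are compact, and hence $\mathcal{F}(x)$ is relatively compact. Alternatively, one can argue by hand: any sequence in $\mathcal{F}(x)$ subconverges spherically to some $z^*\in S^n$, and $z^*$ cannot lie on $\partial Y$ (else a $d_Y$-bounded sequence approaching $\partial Y$ would contradict completeness of $d_Y$), so $z^*\in Y$; local equivalence of $d_Y$ with the spherical metric at interior points then upgrades this to $d_Y$-convergence. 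With both hypotheses of Theorem \ref{FNTheorem2.8} in hand, I would conclude that $\mathcal{F}$ is relatively compact in $C(X,Y)$.
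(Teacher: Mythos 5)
This theorem is quoted in the paper from \cite{fletcher2024normal} without proof, so there is no in-paper argument to compare against; your proposal is correct and follows the standard route (presumably that of the cited source): the forward direction via continuity of the evaluation map, and the reverse direction via Theorem \ref{FNTheorem2.8}, with equicontinuity coming from the local uniform modulus, orbit boundedness propagated from $x_0$ to any $x$ along a compact path using Proposition \ref{FNprop2.6}, and completeness of $(Y,d_Y)$ converting boundedness into relative compactness. The only point to tighten is your parenthetical ``by hand'' alternative: a $d_Y$-bounded sequence tending to $\partial Y$ does not directly contradict completeness (bounded is not Cauchy), so you should lean on the Hopf--Rinow/Cohn--Vossen properness argument, which is sound here because $(Y,d_Y)$ is a complete, locally compact length space.
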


    \subsection{Normal Families}As stated in our introduction, Beardon and Minda \cite{beardon2014normal} characterize normal families in terms of a locally uniform Lipschitz condition with a view to applications to families of holomorphic functions. The first author and Nicks \cite{fletcher2024normal} then generalized the Beardon and Minda viewpoint to the higher dimensional setting, and a local uniform H\"{o}lder condition with a view to applications to families of quasiregular maps. Here we take a moment to highlight some of this work, as we will be relying on it in Section $4.$  

    \begin{definition}
        If $X,Y$ are subdomains of $S^n,$ then for $M\geq 1,$ denote by $\mathcal{Q}_M(X,Y)$ the subset of $C(X,Y)$ consisting of all $M$-quasiregular mappings from $X$ to $Y.$
    \end{definition}
    Now, recall the following definition of a family being normal relative to a domain from \cite{beardon2014normal}.

    \begin{definition}
        \label{normrelto}
        Suppose $X,Y,Z$ are domains in $S^n$ each equipped with conformal metrics, where $Y\subset Z$ and $\mathcal{F}\subset C(X,Y).$ Then $\mathcal{F}$ is a \textit{normal family relative to $Z$} if $\mathcal{F}$ is relatively compact in $C(X,Z)$ and the closure of $\mathcal{F}$ in $C(X,Z)$ is the closure of $\mathcal{F}$ in $C(X,Y)$ together possibly with constant maps into $\partial Y,$ viewing the boundary of $Y$ as a subset of $Z.$
    \end{definition}

    There are then several equivalent definitions of normal families.
    
    \begin{definition}
        Let $X,Y$ be subdomains of $S^n$ with conformal metrics, let $M\geq 1$ and $\mathcal{F}\subset \mathcal{Q}_M(X,Y).$ Then we say $\mathcal{F}$ is a normal family if any (and hence all) of the following equivalent statements hold:
        \begin{itemize}
            \item[(i)]$\mathcal{F}$ is a normal family relative to $S^n$ in the sense of Definition \ref{normrelto};

            \item[(ii)]$\mathcal{F}$ is relatively compact in $C(X,S^n);$

            \item[(iii)]every sequence $(f_m)$ in $\mathcal{F}$ has subsequence that converges uniformly on compact subsets of $X,$ in the spherical metric, to a limit function $f:X\rightarrow S^n.$ 
        \end{itemize}
    \end{definition}
    
    The following is one of the main results from \cite{fletcher2024normal}. It is incredibly useful in the case where $\mathcal{F}\subset \mathcal{Q}_M(X,S^n),$ as it relates normality of a family to local uniform $\omega$-continuity.
    
    \begin{theorem}[\cite{fletcher2024normal}, Theorem 3.7]
        \label{FNTheorem3.7}
        Let $X\subset S^n$ be a domain equipped with distance function $d_X$ arising from a conformal metric and let $\mathcal{F}\subset \mathcal{Q}_M(X,Y)$ be a family of $M$-quasiregular mappings defined on $X$ with image contained in $Y\subset S^n$ equipped with distance function $d_Y$ arising from a complete conformal metric. Then $\mathcal{F}$ is relatively compact in $C(X,Y)$ if and only if
        \begin{itemize}
            \item[(i)]$\mathcal{F}$ is locally uniformly $\omega$-continuous, with $\omega(t) = t^\alpha, \alpha = M^{1/(1-n)}$ and

            \item[(ii)]there exists $x_0\in X$ such that $\mathcal{F}(x_0) = \{f(x_0):f\in \mathcal{F}\}$ is relatively compact in $Y.$  
        \end{itemize}
        In particular, if $Y$ is $S^n$ with the spherical metric and if $\mathcal{F}\subset \mathcal{Q}_M(X,S^n)$ is a family of $M$-quasiregular mappings, then $\mathcal{F}$ is normal if and only if $\mathcal{F}$ is locally uniformly $\omega$-continuous, with $\omega(t) = t^\alpha, \alpha = M^{1/(1-n)}.$
    \end{theorem}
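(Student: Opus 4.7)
The strategy is to prove the two directions separately, with the forward direction carrying all the analytic weight. For the sufficiency of (i) and (ii), I would simply apply the generalized Arzel\`{a}--Ascoli statement in Theorem \ref{FNTheorem2.9}: local uniform $\omega$-continuity promotes to uniform $\omega$-continuity on compact sets via Proposition \ref{FNprop2.6}, and combined with relative compactness of $\mathcal{F}(x_0)$ in the complete metric space $(Y,d_Y)$, this yields relative compactness of $\mathcal{F}$ in $C(X,Y)$. Notably, no property of quasiregularity is used in this direction.

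For the necessity, suppose $\mathcal{F}$ is relatively compact in $C(X,Y)$. Condition (ii) is immediate because the evaluation map $\mathrm{ev}_{x_0}: C(X,Y)\to Y$, $f\mapsto f(x_0)$, is continuous, so $\mathcal{F}(x_0)$ is the continuous image of a relatively compact set. The substantive work is (i), which I would approach in two stages: first, a single-map local H\"{o}lder estimate with exponent $\alpha = M^{1/(1-n)}$ and a constant proportional to the diameter of a local image; second, a uniform bound on those diameters across the family, obtained from equicontinuity. Fixing $x_0\in X$ and a closed ball $\overline{B_X(x_0,2r)}\subset X$ on which $\lambda_X$ is comparable to a constant, for any $x,y$ near $x_0$ one constructs a ring domain $A$ whose inner component is a ball around $x$ of radius comparable to $d_X(x,y)$ and whose outer component is the complement of $B_X(x_0,r)$. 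The modulus of the standard-annulus path family gives an upper bound of the form $M(\Gamma_A)\leq c(\log(r/d_X(x,y)))^{1-n}$. Applying the $K_O$-inequality for quasiregular maps (the Rickman generalization of Proposition \ref{KO-inequality}) in the form $M(f(\Gamma_A))\leq M\cdot M(\Gamma_A)$, and then bounding $M(f(\Gamma_A))$ from below using Theorem \ref{VHn} applied to the ratio $\mathrm{diam}(f(B_X(x_0,r)))/d_Y(f(x),f(y))$, the chain closes after inverting the asymptotic $\mathcal{H}_n(s)\asymp \omega_{n-1}(\log s)^{1-n}$ and produces
\begin{equation*}
d_Y(f(x),f(y)) \leq C\cdot \mathrm{diam}(f(B_X(x_0,r))) \cdot d_X(x,y)^{\alpha}.
\end{equation*}

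The main obstacle is extracting precisely the exponent $\alpha = M^{1/(1-n)}$ from this chain: it requires pinpointing the logarithmic asymptotic of V\"{a}is\"{a}l\"{a}'s $\mathcal{H}_n$ and applying the $K_O$-inequality in the correct direction, since any slackness degrades the exponent. Once the pointwise estimate is in hand, equicontinuity at $x_0$ (supplied by Theorem \ref{FNTheorem2.8}) together with relative compactness of $\mathcal{F}(x_0)$ yields, after shrinking $\delta$ if necessary, a uniform bound $\sup_{f\in\mathcal{F}} \mathrm{diam}(f(B_X(x_0,\delta))) < \infty$, converting the individual H\"{o}lder bounds into local uniform $\omega$-continuity with $\omega(t)=t^\alpha$. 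The ``in particular'' statement for $Y=S^n$ is then automatic, since the spherical metric is complete and normality coincides with relative compactness in $C(X,S^n)$.
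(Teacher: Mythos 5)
This statement is quoted from \cite{fletcher2024normal} (Theorem 3.7) and the paper under review gives no proof of it, so there is nothing internal to compare against; your proposal must be judged against the argument in the cited source, and in outline it reconstructs that argument faithfully. The sufficiency of (i) and (ii) is indeed an immediate application of Theorem \ref{FNTheorem2.9}, the necessity of (ii) follows from continuity of evaluation, and the necessity of (i) is correctly identified as the only substantive step, handled by the standard modulus-of-rings proof of local H\"{o}lder continuity for quasiregular maps together with an equicontinuity argument to make the diameter constant uniform over $\mathcal{F}$. Two points need repair before this is a proof. First, the inequality you invoke, $M(f(\Gamma_A))\leq M\cdot M(\Gamma_A)$, is \emph{not} the $K_O$-inequality: Proposition \ref{KO-inequality} reads $M(\Gamma)\leq K_O(f)M(f(\Gamma))$, is stated only for quasiconformal (injective) maps, and its quasiregular generalization carries a multiplicity factor and still points the wrong way. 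The inequality you actually need is Poletsky's (or V\"{a}is\"{a}l\"{a}'s) inequality $M(f\Gamma)\leq K_I(f)M(\Gamma)$, which is what produces the exponent $K_I(f)^{1/(1-n)}\leq M^{1/(1-n)}$; citing the wrong theorem here is not cosmetic, since the direction of the modulus inequality is exactly what determines whether you get an upper or a lower H\"{o}lder bound. Second, Theorem \ref{VHn} as quoted gives only qualitative information about $\mathcal{H}_n$ (monotonicity, limits, positivity); to ``close the chain'' and extract the exponent you must import the quantitative lower bound $\mathcal{H}_n(s)\geq \omega_{n-1}\bigl(\log(\lambda_n^2 s)\bigr)^{1-n}$ from V\"{a}is\"{a}l\"{a}'s book, which you flag as the obstacle but do not actually supply. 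Both gaps are fillable from the standard literature, and with them filled your route coincides with the one taken in \cite{fletcher2024normal}.
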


    If we consider a family $\mathcal{F}\subset \mathcal{Q}_M(X,\mathbb{R}^n)$ instead then we can say $\mathcal{F}$ is normal if and only if every sequence in $\mathcal{F}$ has a subsequence which converges uniformly on compact sets to either an element of $\mathcal{Q}_M(X,\mathbb{R}^n)$ or diverges to infinity. From this realization, we form a new definition. 

    \begin{definition}
        Let $\mathcal{F}\subset \mathcal{Q}_M(X,\mathbb{R}^n),$ where $X$ is domain with a distance function $d_X$ arising from a conformal metric and $\mathbb{R}^n$ is equipped with the Euclidean metric. We say $\mathcal{F}$ is \textit{finitely normal} if every sequence in $\mathcal{F}$ has a subsequence which converges uniformly on compact sets  to an element of $\mathcal{Q}_M(X,\mathbb{R}^n).$ 
    \end{definition}

    Then, there exists a result similar to that of Theorem \ref{FNTheorem3.7} but for the finitely normal setting.

    \begin{corollary}[\cite{fletcher2024normal},Corollary 3.10]
        \label{FNCorollary3.10}
        Let $X\subset S^n$ be a domain equipped with distance function $d_X$ arising from a conformal metric and let $\mathcal{F}\subset \mathcal{Q}_M(X,\mathbb{R}^n)$ be a family of $M$-quasiregular mappings. Then $\mathcal{F}$ is finitely normal if and only if $\mathcal{F}$ is locally uniformly $\omega$-continuous, with $\omega(t) = t^\alpha, \alpha = M^{1/(1-n)},$ and $\{f(x_0):f\in \mathcal{F}\}$ is bounded for some $x_0\in X.$ 
    \end{corollary}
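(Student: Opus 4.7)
The plan is to derive Corollary \ref{FNCorollary3.10} as an almost immediate consequence of Theorem \ref{FNTheorem3.7} with $Y=\mathbb{R}^n$, together with the standard convergence theorem for quasiregular mappings. The point is that the Euclidean metric on $\mathbb{R}^n$ arises from the conformal density $\lambda\equiv 1$ and is complete, so Theorem \ref{FNTheorem3.7} is directly applicable. Moreover, by Heine--Borel, a subset of $\mathbb{R}^n$ is relatively compact precisely when it is bounded, so the orbit hypothesis in Theorem \ref{FNTheorem3.7} collapses to the boundedness of $\{f(x_0):f\in\mathcal{F}\}$. Reading Theorem \ref{FNTheorem3.7} in this specialized setting states exactly that $\mathcal{F}$ is relatively compact in $C(X,\mathbb{R}^n)$ if and only if $\mathcal{F}$ is locally uniformly $\omega$-continuous with $\omega(t)=t^\alpha$, $\alpha=M^{1/(1-n)}$, and some orbit $\mathcal{F}(x_0)$ is bounded.

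For the forward direction, suppose $\mathcal{F}$ is finitely normal. Since $\mathcal{Q}_M(X,\mathbb{R}^n)\subset C(X,\mathbb{R}^n)$, every sequence in $\mathcal{F}$ has a subsequence converging to an element of $C(X,\mathbb{R}^n)$, so $\mathcal{F}$ is relatively compact in $C(X,\mathbb{R}^n)$. The specialization of Theorem \ref{FNTheorem3.7} above then gives both the local uniform $\omega$-continuity and the boundedness of some orbit. (Equivalently, the boundedness follows directly: if $\{f(x_0):f\in\mathcal{F}\}$ were unbounded for every $x_0$, one could extract a sequence $f_m$ with $|f_m(x_0)|\to\infty$ at a fixed basepoint, and no subsequence could converge to an element of $\mathcal{Q}_M(X,\mathbb{R}^n)$, whose value at $x_0$ is finite.)

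For the reverse direction, assume the local uniform $\omega$-continuity and the existence of $x_0$ with $\mathcal{F}(x_0)$ bounded. Apply Theorem \ref{FNTheorem3.7} with $Y=\mathbb{R}^n$ to conclude that $\mathcal{F}$ is relatively compact in $C(X,\mathbb{R}^n)$. Thus every sequence $(f_m)\subset\mathcal{F}$ has a subsequence $(f_{m_k})$ converging uniformly on compact subsets of $X$ to some $g\in C(X,\mathbb{R}^n)$, in the Euclidean metric. It remains to show $g\in\mathcal{Q}_M(X,\mathbb{R}^n)$; this is where I would invoke the classical convergence theorem for quasiregular mappings (see, e.g., \cite{rickman2012quasiregular}): a locally uniform limit of a sequence of $M$-quasiregular mappings is either $M$-quasiregular or constant, and constants are admitted as elements of $\mathcal{Q}_M(X,\mathbb{R}^n)$.

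The main obstacle is really not an obstacle at all once the right viewpoint is adopted: the whole corollary reduces to recognizing $(\mathbb{R}^n,|\cdot|)$ as a valid choice of target for Theorem \ref{FNTheorem3.7}, so that relative compactness in $C(X,\mathbb{R}^n)$ is characterized by precisely the two stated conditions, and then upgrading ``continuous limit'' to ``quasiregular limit'' via the standard closure property of $\mathcal{Q}_M$ under compact convergence. The only mildly delicate point is ensuring the conventions on constant limits are consistent, but as Fletcher--Nicks build in this allowance by definition, no further work is needed.
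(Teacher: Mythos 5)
Your proposal is correct. The paper states this corollary only as a citation of Fletcher--Nicks and gives no proof of its own, and your derivation --- specializing Theorem \ref{FNTheorem3.7} to $Y=\mathbb{R}^n$ with the complete Euclidean conformal metric, converting relative compactness of the orbit into boundedness via Heine--Borel, and upgrading the continuous locally uniform limit to an $M$-quasiregular (possibly constant) one via the standard convergence theorem for quasiregular mappings --- is exactly the intended route.
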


\section{Quasiconformal Implies Quasisymmetric on Uniformly K-QCH Domains}

In \cite{ackermann2021quasiconformality}, Ackermann and the first author prove a map  $f:\mathbb{B}^n\rightarrow \mathbb{B}^n$ is quasiconformally if and only if it is $\eta$-quasisymmetric, where we equip $\mathbb{B}^n$ with the hyperbolic metric. They then pose the question of whether the result generalizes to the setting of an arbitrary domain $X\subset \mathbb{R}^n$ equipped with the quasihyperbolic metric. In this section, we investigate that question. For Ackermann and the first author the bulk of the work was in proving the forward implication. To do so, they rely heavily on the fact that the set of M\"{o}bius maps are a transitive collection of conformal hyperbolic isometries on $\mathbb{B}^n.$ So, to generalize this strategy to a domain $X\subset \mathbb{R}^n$ equipped with the quasihyperbolic metric, it is natural to require $X$ to be a uniformly QCH domain. 

In our situation, to prove quasiconformality implies quasisymmetry, we must first prove a quasiconformal map is weak quasisymmetric and then prove that quasisymmetry and weak quasisymmetry are equivalent. Proving a quasiconformal map is weak quasisymmetric requires a local result which shows that quasiconformal maps satisfy the $\eta$-quasisymmetric condition when we fix a base point.

\begin{lemma}
    \label{localqs}
    Let $X,Y\subset \mathbb{R}^n$ domains. Suppose $f:(X,k_X)\rightarrow (Y,k_Y)$ is a $K$-quasiconformal map, $T>0,$ and $x_0\in X.$ Then there exists a constant $\xi$ depending on $x_0,T,n,$ and $K$ such that
    \begin{align*}
        \frac{L_f(x_0,r)}{\ell_f(x_0,Tr)} & \leq \xi(x_0,n,K,T),
    \end{align*}
    for all $r>0.$
\end{lemma}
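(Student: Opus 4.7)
By Proposition \ref{qhcomplete}, $k_X$ is complete, and by Proposition \ref{geodesicspace}, $(X, k_X)$ is geodesic; since local compactness is inherited from the Euclidean topology, the Hopf-Rinow theorem implies closed quasihyperbolic balls are compact. Hence I can choose $a, b \in X$ attaining the extremal values: $k_X(a, x_0) = r$ with $k_Y(f(a), f(x_0)) = L_f(x_0, r)$, and $k_X(b, x_0) = Tr$ with $k_Y(f(b), f(x_0)) = \ell_f(x_0, Tr)$. The plan is to split $r$ into two regimes and treat each separately.

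For large $r$, say $r \geq 1$, I apply Theorem \ref{GO} directly. Writing $\alpha = K^{1/(n-1)} \geq 1$, the condition $r \geq 1$ forces $\max\{r, r^{1/\alpha}\} = r$, so $L_f(x_0, r) \leq C_1 r$. Splitting on whether $Tr \geq 1$ or not, the quantity $\min\{Tr, (Tr)^\alpha\}$ is bounded below by $\min\{T, T^\alpha\} \cdot r$, whence $\ell_f(x_0, Tr) \geq C_2 \min\{T, T^\alpha\} \cdot r$. The ratio is therefore at most $C_1 / (C_2 \min\{T, T^\alpha\})$, a function of $K, n, T$ alone.

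For $r \in (0, 1)$, the idea is to reduce to Euclidean local quasisymmetry. Quasiconformality is a local property and is invariant under conformal changes of metric, so $f$ is also $K$-quasiconformal in the Euclidean sense. The classical Euclidean theory (which may be recovered from Proposition \ref{KO-inequality} together with the $\mathcal{H}_n$ estimates in Theorem \ref{VHn}) shows that $f$ restricted to a compact Euclidean subset of $X$ is $\eta$-quasisymmetric in the Euclidean metric, with $\eta$ depending only on $K, n$ and that subset. I take the compactum to be the closed Euclidean ball of radius $d(x_0, \partial X)/2$ centered at $x_0$; a direct integral estimate shows $k_X(y, x_0)$ is bi-Lipschitz equivalent to $|y - x_0|/d(x_0, \partial X)$ on this ball, and the analogous equivalence holds for $k_Y$ on a Euclidean ball around $f(x_0)$. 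Using the standard inequality $|x - y| \leq (e^{k_X(x,y)} - 1)\min\{d(x, \partial X), d(y, \partial X)\}$, for $r$ below a threshold depending on $x_0, K, n, T$, the points $a, b$ lie in the Euclidean neighborhood of $x_0$, and by the Hölder bound of Theorem \ref{GO} their images $f(a), f(b)$ lie in the corresponding Euclidean neighborhood of $f(x_0)$. The Euclidean $\eta$-quasisymmetry then bounds $|f(a) - f(x_0)|/|f(b) - f(x_0)| \leq \eta(|a-x_0|/|b-x_0|)$, and unpacking the bi-Lipschitz comparisons yields $L_f(x_0, r)/\ell_f(x_0, Tr) \leq \xi_1(x_0, K, n, T)$. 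Combining this with the large-$r$ bound (and using continuity of the ratio in $r$ to patch any intermediate range) yields the desired $\xi(x_0, n, K, T)$. The principal obstacle is in the small-$r$ argument: making it fully quantitative requires coordinating thresholds on $r$ and on Euclidean neighborhood sizes of $x_0$ and $f(x_0)$, so that $a, b, f(a), f(b)$ simultaneously lie where the bi-Lipschitz comparisons are valid.
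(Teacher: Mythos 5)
Your strategy is correct but genuinely different from the paper's in the small-$r$ regime. The paper splits at a radius $r_0$ supplied by Theorem \ref{FN2014} (transferred to the quasihyperbolic metric via local equivalence with the Euclidean one), uses that theorem to bound the ratio when both extremal points stay in $\overline{B_X(x_0,r_0)}$, and then runs a four-case Gehring--Osgood computation (Theorem \ref{GO}) for all radii where an extremal point escapes. You instead handle small $r$ by descending to the Euclidean metric directly: bi-Lipschitz comparison of $k_X$ with $\abs{\cdot-x_0}/d(x_0,\partial X)$ on the half-distance ball, the classical local $\eta$-quasisymmetry of $K$-quasiconformal maps on compacta (the egg-yolk principle), and the analogous comparison on the target side; and you handle $r\geq 1$ by Gehring--Osgood. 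Both routes work. Yours trades the citation of Theorem \ref{FN2014} for the classical local quasisymmetry theorem, which is not stated in the preliminaries but is standard and, as you note, recoverable from Proposition \ref{KO-inequality} and Theorem \ref{VHn}; your large-$r$ computation is also cleaner than the paper's four cases because you invoke it only for $r\geq 1$. Note that both extremal points are needed (the paper's own existence claim is likewise justified by your Hopf--Rinow observation), and that your small-$r$ threshold can in fact be chosen to depend only on $n,K,T$, since the containments $\abs{a-x_0}\leq d(x_0,\partial X)/2$ and $\abs{f(a)-f(x_0)}\leq d(f(x_0),\partial Y)/2$ translate via $\abs{x-y}\leq (e^{k(x,y)}-1)\min\{d(x,\partial X),d(y,\partial X)\}$ and Theorem \ref{GO} into purely quasihyperbolic smallness conditions on $r$.

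The one soft spot is the window between your small-$r$ threshold $r_1$ and $1$, which you dispatch with ``continuity of the ratio in $r$.'' This is legitimate for a single fixed $f$ and $x_0$ (spheres are compact, so $L_f$ and $\ell_f$ are continuous and $\ell_f>0$), but it produces a constant depending on $f$ in an uncontrolled way, and the lemma is later applied in Theorem \ref{QCHweakqs} to the whole family $h=f\circ g$, $g\in G$, with a single basepoint, where one needs $\xi$ to be uniform over the family. The repair is immediate and keeps you within your own framework: your $r\geq 1$ computation extends verbatim to all $r\geq r_1$, since for such $r$ one has $\max\{r,r^{1/\alpha}\}\leq\max\{1,r_1^{1/\alpha-1}\}\,r$ and $\min\{Tr,(Tr)^{\alpha}\}\geq\min\{1,(Tr_1)^{\alpha-1}\}\,Tr$, giving a Gehring--Osgood bound on the whole range $[r_1,\infty)$ with a constant depending only on $n,K,T$. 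With that substitution your proof is complete and quantitative.
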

\begin{proof}
    Let $x_0\in X.$ Given $r,T>0$ find $y,z\in X$ such that $k_X(x_0,y) = r,$ $L_f(x_0,r) = k_Y(f(x_0),f(y)),$ $k_X(x_0,z) = Tr$ and $\ell_f(x_0,Tr) = k_Y(f(x_0),f(z)).$ As the Euclidean and quasihyperbolic distance functions both arise from conformal metrics, they are locally equivalent. So, we can use Theorem \ref{FN2014} in the quasihyperbolic setting to say there exist $C_0$ and $r_0>0$ such that
    \begin{align}
        \frac{k_Y(f(x_0),f(y))}{k_Y(f(x_0),f(z))} & = \frac{L_f(x_0,r)}{\ell_f(x_0,Tr)} \leq \frac{C_0^2}{T^\nu},\label{Final I}
    \end{align}
    for all $0<T\leq 1$ and $r\in (0,r_0),$ where $\nu = (K_0(f)i(x_0,f))^{1/(n-1)}.$ In other words, we have the upper bound above if $y,z\in \overline{B_X(x_0,r_0)}.$ We now need to consider when $y$ or $z$ are in $X\setminus\overline{B_X(x_0,r_0)}.$ To do so, we need two cases depending on whether $r_0<1$ or $r_0\geq 1.$ These two cases each have two sub-cases considering whether $T<1$ or $T\geq 1.$\\
    \textbf{Case 1:} Suppose $r_0<1$ and $T<1.$ Then $Tr<r.$ We require $y$ or $z$ to be in $ X\setminus\overline{B_X(x_0,r_0)},$ so we can deduce that $k_X(x_0,y) = r>r_0.$ Recall that $\alpha = K^{1/(n-1)}.$ Since $r_0<r$ and $\frac{1}{\alpha} - 1<0,$ notice
    \begin{align}
        r^{\frac{1}{\alpha}} & = r^{\frac{1}{\alpha}-1}\cdot r \leq r_0^{\frac{1}{\alpha}-1} \cdot r.\label{C11}
    \end{align}
    Moreover, $r_0^{\frac{1}{\alpha}-1}>1$ which implies $r_0^{\frac{1}{\alpha}-1} \cdot r\geq r.$ Observe,
    \begin{align}
        (Tr)^\alpha & = (Tr)^{\alpha-1}\cdot Tr \geq (Tr_0)^{\alpha-1}\cdot Tr.\label{C12}
    \end{align}
    Since $T,r_0<1,$ it follows that
    \begin{align*}
        (Tr_0)^{\alpha-1}\cdot Tr & < Tr.
    \end{align*}
    Using Theorem \ref{GO}, (\ref{C11}), and (\ref{C12})
    \begin{align}
        \frac{k_Y(f(x_0),f(y))}{k_Y(f(x_0),f(z))}& \leq \frac{C_1\max\{k_X(x_0,y), k_X(x_0,y)^{\frac{1}{\alpha}}\}}{C_2\min\{k_X(x_0,y), k_X(x_0,y)^{\alpha}\}}\nonumber\\
                        & = \frac{C_1\max\{r,r^\frac{1}{\alpha}\}}{C_2\min\{Tr,(Tr)^\alpha\}}\nonumber\\
                        & \leq \frac{C_1\max\{r,r_0^{\frac{1}{\alpha}-1}r\}}{C_2\min\{Tr,(Tr_0)^{\alpha-1}Tr\}}\nonumber\\
                        & = \frac{C_1 r_0^{\frac{1}{\alpha}-1}r}{C_2(Tr_0)^{\alpha-1}Tr}\nonumber\\
                        & = \frac{C_1 r_0^{\frac{1}{\alpha}-\alpha}}{C_2 T^\alpha}.\label{Final II}
    \end{align}
    \textbf{Case 2:} Consider when $r_0<1$ and $T\geq 1.$ Then $Tr>r.$ We require $y$ or $z$ to not be in $\overline{B_X(x_0,r_0)},$ so $k_X(x_0,z) = Tr>r_0.$ Notice,
    \begin{align}
        (Tr)^\alpha & = (Tr)^{\alpha-1}\cdot Tr \geq r_0^{\alpha-1}\cdot Tr.\label{C21}
    \end{align}
    Since $r>\frac{r_0}{T}$ and $\frac{1}{\alpha}-1<0,$ observe that
    \begin{align}
        r^{\frac{1}{\alpha}} & = r^{\frac{1}{\alpha}-1}r\leq \left(\frac{r_0}{T}\right)^{\frac{1}{\alpha}-1}\cdot r. \label{C22}
    \end{align}
    Finally, note that $r_0^{\alpha-1}<1$ and $\left(\frac{r_0}{T}\right)^{\frac{1}{\alpha}-1} >1$ since $\frac{r_0}{T}<r_0<1.$ Then, by Theorem \ref{GO}, (\ref{C21}) and (\ref{C22}), we have
    \begin{align}
        \frac{k_Y(f(x_0),f(y))}{k_Y(f(x_0),f(z))}& \leq \frac{C_1\max\{k_X(x_0,y), k_X(x_0,y)^{\frac{1}{\alpha}}\}}{C_2\min\{k_X(x_0,y), k_X(x_0,y)^{\alpha}\}}\nonumber\\
                        & = \frac{C_1\max\{r,r^\frac{1}{\alpha}\}}{C_2\min\{Tr,(Tr)^\alpha\}}\nonumber\\
                        & \leq \frac{C_1\max\{r,\left(\frac{r_0}{T}\right)^{\frac{1}{\alpha}-1}r\}}{C_2\min\{Tr,r_0^{\alpha-1}Tr\}}\nonumber\\
                        & \leq \frac{C_1 \left(\frac{r_0}{T}\right)^{\frac{1}{\alpha}-1}r}{C_2r_0^{\alpha-1}Tr}\nonumber\\
                        & = \frac{C_1r_0^{\frac{1}{\alpha}-\alpha}}{C_2T^\frac{1}{\alpha}}.\label{Final III}
    \end{align}
    \textbf{Case 3:} Suppose that $r_0\geq 1$ and $T<1.$ Then, $Tr<r$ so we can deduce that $k_X(x_0,y) = r>r_0\geq 1.$ Then, 
    \begin{align}
        r^{\frac{1}{\alpha}} & = r^{\frac{1}{\alpha}-1}\cdot r \leq r_0^{\frac{1}{\alpha}-1}\cdot r.\label{C31}
    \end{align}
    Assuming $r_0\geq 1$ implies $r_0^{\frac{1}{\alpha}-1}<1$ so $r_0^{\frac{1}{\alpha}-1}\cdot r < r.$ Also, since $r>r_0,$
    \begin{align}
        (Tr)^\alpha & = (Tr)^{\alpha-1}\cdot Tr \geq (Tr_0)^{\alpha-1}\cdot Tr.\label{C32}
    \end{align}
    Using Theorem \ref{GO}, (\ref{C31}), and (\ref{C32}) we have
    \begin{align*}
         \frac{k_Y(f(x_0),f(y))}{k_Y(f(x_0),f(z))}& \leq \frac{C_1\max\{k_X(x_0,y), k_X(x_0,y)^{\frac{1}{\alpha}}\}}{C_2\min\{k_X(x_0,y), k_X(x_0,y)^{\alpha}\}}\\
                        & = \frac{C_1\max\{r,r^\frac{1}{\alpha}\}}{C_2\min\{Tr,(Tr)^\alpha\}}\\
                        & \leq \frac{C_1\max\{r,(r_0)^{\frac{1}{\alpha}-1}r\}}{C_2\min\{Tr,(Tr_0)^{\alpha-1}Tr\}}\\
                        & = \frac{C_1r}{C_2\min\{Tr, (Tr_0)^{\alpha-1}Tr\}}.
    \end{align*}
    If $Tr_0<1$ then 
    \begin{align}
        \frac{C_1r}{C_2\min\{Tr, (Tr_0)^{\alpha-1}Tr\}} & = \frac{C_1r}{C_2(Tr_0)^{\alpha-1}Tr} = \frac{C_1}{C_2 T^\alpha r_0^{\alpha-1}}.\label{Final IV}
    \end{align}
    Alternatively, if $Tr_0\geq1$ then
    \begin{align}
        \frac{C_1r}{C_2\min\{Tr, (Tr_0)^{\alpha-1}Tr\}} & = \frac{C_1r}{C_2Tr} = \frac{C_1}{C_2 T}.\label{Final V}
    \end{align}
    \textbf{Case 4}: Suppose $r_0\geq 1$ and $T\geq 1.$ Then, $Tr>r$ so $k_X(x_0,z) = Tr>r_0.$ Notice,
    \begin{align}
        (Tr)^\alpha & = (Tr)^{\alpha-1}Tr\geq (r_0)^{\alpha-1}Tr.\label{C41}
    \end{align}
    We assumed $r>\frac{r_0}{T},$ so
    \begin{align}
        r^\frac{1}{\alpha} & = r^{\frac{1}{\alpha}-1}\cdot r \leq \left(\frac{r_0}{T}\right)^{\frac{1}{\alpha}-1}\cdot r.\label{C42}
    \end{align}
    Thus, by Theorem \ref{GO}, (\ref{C41}), and (\ref{C42})
    \begin{align*}
        \frac{k_Y(f(x_0),f(y))}{k_Y(f(x_0),f(z))}& \leq \frac{C_1\max\{k_X(x_0,y), k_X(x_0,y)^{\frac{1}{\alpha}}\}}{C_2\min\{k_X(x_0,y), k_X(x_0,y)^{\alpha}\}}\\
                        & = \frac{C_1\max\{r,r^\frac{1}{\alpha}\}}{C_2\min\{Tr,(Tr)^\alpha\}}\\
                        & \leq \frac{C_1\max\{r,\left(\frac{r_0}{T}\right)^{\frac{1}{\alpha}-1}r\}}{C_2\min\{Tr,(r_0)^{\alpha-1}Tr\}}\\
                        & = \frac{C_1\max\{r,\left(\frac{r_0}{T}\right)^{\frac{1}{\alpha}-1}r\}}{C_2 Tr},
    \end{align*}
    where the last line follows because $r_0^{\alpha-1}Tr\geq Tr$ since $Tr>r_0\geq 1$ in this case. Now, if $r_0<T,$ then $\left(\frac{r_0}{T}\right)^{\frac{1}{\alpha}-1}\cdot r > r,$ so
    \begin{align}
        \frac{C_1\max\{r,\left(\frac{r_0}{T}\right)^{\frac{1}{\alpha}-1}r\}}{C_2Tr} & = \frac{C_1\left(\frac{r_0}{T}\right)^{\frac{1}{\alpha}-1}r}{C_2Tr} = \frac{C_1r_0^{\frac{1}{\alpha}-1}}{C_2T^\frac{1}{\alpha}}.\label{Final VI}
    \end{align}
    If instead $r_0\geq T,$ then $\left(\frac{r_0}{T}\right)^{\frac{1}{\alpha}-1}\cdot r \leq r.$ Thus,
    \begin{align}
        \frac{C_1\max\{r,\left(\frac{r_0}{T}\right)^\frac{1}{\alpha}r\}}{C_2Tr}
                        & = \frac{C_1r}{C_2Tr} = \frac{C_1}{TC_2}.\label{Final VII}
    \end{align}
    Hence, for all $y,z\in X$ such that $k_X(x_0,y) = r$ and $k_X(x_0,z) = Tr,$ by $(\ref{Final I}),(\ref{Final II}),(\ref{Final III}), (\ref{Final IV}),(\ref{Final V}),(\ref{Final VI})$ and $(\ref{Final VII}),$
    \begin{align*}
        \frac{L_f(x_0,r)}{\ell_f(x_0,Tr)} & \leq \max\left\{\frac{C_0^2}{T^\nu}, \frac{C_1 r_0^{\frac{1}{\alpha}-\alpha}}{C_2 T^\alpha}, \frac{C_1r_0^{\frac{1}{\alpha}-\alpha}}{C_2T^\frac{1}{\alpha}}, \frac{C_1}{C_2 T}, \frac{C_1r_0^{\frac{1}{\alpha}-1}}{C_2T^\frac{1}{\alpha}}, \frac{C_1}{C_2 T^\alpha r_0^{\alpha-1}}\right\} = \xi(x_0,n,K,T),
    \end{align*}
    and clearly $\xi$ only depends on $K,n,x_0,$ and $T.$ 
\end{proof}

Next, we use the preceding lemma to prove quasiconformal maps are weak quasisymmetric.

\begin{theorem}
    \label{QCHweakqs}
    Let $X,Y$ proper subdomains of $\mathbb{R}^n$ and suppose $X$ is a uniformly $M$-QCH domain. If $f:(X,k_X)\rightarrow (Y,k_Y)$ is $K$-quasiconformal then it is weakly $H$-quasisymmetric.
\end{theorem}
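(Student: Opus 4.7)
The plan is to use Lemma \ref{localqs} together with the transitive collection $G$ of $M$-quasiconformal self-maps furnished by the uniform $M$-QCH hypothesis. The idea is to reduce the analysis at an arbitrary point $x \in X$ to a fixed base point $x_0^* \in X$ by composing $f$ with a map in $G$ that sends $x_0^*$ to $x$, thereby replacing the $x$-dependent constant of Lemma \ref{localqs} by one depending only on the fixed data $x_0^*, n, KM$.

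First, fix a base point $x_0^* \in X$. For each $x \in X$, the transitivity of $G$ supplies $g_x \in G$ with $g_x(x_0^*) = x$, and the composition $h_x := f \circ g_x : X \to Y$ is $KM$-quasiconformal. Given distinct $x, y, z \in X$ with $k_X(x,y) \leq k_X(x,z)$, set $y' = g_x^{-1}(y)$, $z' = g_x^{-1}(z)$, $r' = k_X(x_0^*, y')$, and $s' = k_X(x_0^*, z')$. Since $h_x(x_0^*) = f(x)$, $h_x(y') = f(y)$, and $h_x(z') = f(z)$, one has
\[
\frac{k_Y(f(x), f(y))}{k_Y(f(x), f(z))} \leq \frac{L_{h_x}(x_0^*, r')}{\ell_{h_x}(x_0^*, s')},
\]
and Lemma \ref{localqs} applied to $h_x$ at $x_0^*$ with parameter $T = s'/r'$ bounds the right-hand side by $\xi(x_0^*, n, KM, s'/r')$. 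Crucially, this constant depends on the fixed data $x_0^*, n, KM$ and on $T$, but not on $x$.

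Next, to control $s'/r'$, I apply Theorem \ref{GO} to the $M$-quasiconformal map $g_x^{-1}$, obtaining
\[
r' \leq C_1 \max\{k_X(x,y), k_X(x,y)^{1/\alpha}\}, \qquad s' \geq C_2 \min\{k_X(x,z), k_X(x,z)^{\alpha}\},
\]
with $\alpha = M^{1/(n-1)}$ and $C_1, C_2 > 0$ depending only on $n$ and $M$. These estimates bound $s'/r'$ from below in terms of $k_X(x,y)$ and $k_X(x,z)$.

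The main obstacle is that the Gehring-Osgood estimates transition between linear and H\"{o}lder behavior at distance $1$, so the ratio $s'/r'$ cannot be bounded below by a single positive constant across all $(x,y,z)$; in the regime where both $k_X(x,y)$ and $k_X(x,z)$ are small, $s'/r'$ can approach zero, and $\xi(x_0^*, n, KM, T) \to \infty$ as $T \to 0$. To overcome this, I would mirror the four-case analysis used in the proof of Lemma \ref{localqs}, splitting according to whether $k_X(x,y)$ and $k_X(x,z)$ lie below or above $1$. Within each case, the Gehring-Osgood expressions simplify to pure powers, the value of $\xi(x_0^*, n, KM, s'/r')$ can be bounded by a quantity depending only on $n, K, M$, and the fixed base point $x_0^*$, and taking the maximum over the finitely many cases yields a uniform constant $H$. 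This establishes that $f$ is weakly $H$-quasisymmetric.
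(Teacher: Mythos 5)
Your overall skeleton matches the paper's: fix a base point, conjugate by a homogeneity map $g_x\in G$, apply Lemma \ref{localqs} to $h_x=f\circ g_x$, and then try to make the parameter $T$ independent of $x$. The gap is in that last step. You correctly identify that $T=s'/r'$ can degenerate, but the proposed remedy --- splitting according to whether $k_X(x,y)$ and $k_X(x,z)$ lie above or below $1$ --- does not repair the critical case where both are small. There, Theorem \ref{GO} applied separately to $y$ and to $z$ gives only $r'\leq C_1 k_X(x,y)^{1/\alpha}$ and $s'\geq C_2 k_X(x,z)^{\alpha}$ with $\alpha=M^{1/(n-1)}>1$, so even in the worst admissible configuration $k_X(x,y)=k_X(x,z)=r$ the best lower bound available is $s'/r'\geq (C_2/C_1)\,r^{\alpha-1/\alpha}\rightarrow 0$ as $r\rightarrow 0$, while $\xi(x_0^*,n,KM,T)\rightarrow\infty$ as $T\rightarrow 0$. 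The ``pure powers'' in this case do not cancel: the two-sided Gehring--Osgood estimates carry a multiplicative gap that blows up at small scales, which is precisely the regime where weak quasisymmetry is non-trivial. So the case analysis cannot produce a bound depending only on $x_0^*,n,K,M$.

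The missing idea is to stop estimating $r'$ and $s'$ independently of one another. First reduce, via Proposition \ref{weakqsequiv}, to the case $k_X(x,y)=k_X(x,z)=r$, so that $y'$ and $z'$ both lie on $g_x^{-1}(\partial B_X(x,r))$ and hence $r'\leq L_{g_x^{-1}}(x,r)$ and $s'\geq \ell_{g_x^{-1}}(x,r)$. The quantity to control is then the distortion of a single sphere by the $M$-quasiconformal map $g_x^{-1}$ alone, and the paper bounds
\begin{align*}
\frac{L_{g_x^{-1}}(x,r)}{\ell_{g_x^{-1}}(x,r)} & \leq \exp\left(\left(\frac{M\omega_{n-1}}{\mathcal{H}_n(1)}\right)^{\frac{1}{n-1}}\right)
\end{align*}
uniformly in $x$ and $r$, by applying Proposition \ref{KO-inequality} and Theorem \ref{VHn} to the ring between the images of the two spheres, together with the standard upper bound for the modulus of the path family joining concentric spheres. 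This bound depends only on $n$ and $M$ and, crucially, does not degenerate as $r\rightarrow 0$; it is what makes $T$, and hence $\xi$, independent of $x$. (Your ``both above $1$'' case is fine and corresponds to the paper's separate treatment of $r\geq 1$ directly via Theorem \ref{GO}, with no conjugation needed.)
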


\begin{proof}
    Let $X$ be a uniformly $M$-QCH domain and $f$ an $K$-quasiconformal map. By Proposition \ref{weakqsequiv}, it is sufficient to show $f(B_X(x,r))\subset B_Y(f(x),Hl_f(x,r))$ for all $x\in X$ and $r>0.$ Showing such inclusion is equivalent to proving
    \begin{align*}
        \frac{L_f(x,r)}{l_f(x,r)} & \leq H,
    \end{align*}
    for all $x\in X$ and $r>0.$ To do so, we consider two cases: $r\geq 1$ and $r<1.$\\
    \textbf{Case 1:} Suppose $r\geq 1.$ There exist $y,z\in X$ such that
    \begin{align*}
        L_f(x,r) & = k_Y(f(x),f(y)),
    \end{align*}
    and
    \begin{align*}
        l_f(x,r) & = k_Y(f(x),f(z)).
    \end{align*}
    By Theorem \ref{GO}, there exist constants $C_1,C_2>0$ depending only on $n$ and $K$ such that
    \begin{align*}
        L_f(x,r) & = k_X(f(x),f(y))\leq C_1k_X(x,y) = C_1r,
    \end{align*}
    and
    \begin{align*}
        l_f(x,r) & = k_Y(f(x),f(z)) \geq C_2k_X(x,z) = C_2r.
    \end{align*}
    Thus, for all $x\in X$ and $r\geq 1$ we have
    \begin{align*}
        \frac{L_f(x,r)}{l_f(x,r)} & \leq \frac{C_1r}{C_2r} = \frac{C_1}{C_2}.
    \end{align*}
    \textbf{Case 2:} Suppose $r<1.$ Recall that $G$ is the set of transitive $M$-quasiconformal mappings arising from $X.$ Fix $x_0\in X$ and find $g\in G$ such that $g(x_0) = x.$ Set $h = f\circ g.$ Note that $h$ is an $MK$-quasiconformal map such that $h(x_0) = f(x).$ Then, it follows that
    \begin{align*}
        L_f(x,r)& \leq L_{h}(x_0,L_{g^{-1}}(x,r)),
    \end{align*}
    and
    \begin{align*}
        l_f(x,r)&\geq l_{h}(x_0,l_{g^{-1}}(x,r)).
    \end{align*}
    Thus,
    \begin{align*}
        \frac{L_f(x,r)}{l_f(x,r)}& \leq \frac{ L_{h}(x_0,L_{g^{-1}}(x,r))}{l_{h}(x_0,l_{g^{-1}}(x,r))}.
    \end{align*}
    By Lemma \ref{localqs}, there exists a constant $\psi$ dependent on $n,M,K,x_0$ and $T = \ell_{g^{-1}}(x,r)/L_{g^{-1}}(x,r)\leq 1$ such that
    \begin{align*}
        \frac{ L_{h}(x_0,L_{g^{-1}}(x,r))}{l_{h}(x_0,l_{g^{-1}}(x,r))} & = \frac{ L_{h}(x_0,L_{g^{-1}}(x,r))}{l_{h}(x_0,TL_{g^{-1}}(x,r))} \leq \psi(x_0,n,M,K,T).
    \end{align*}
    However, since $T$ is dependent on $x,$ then $\psi$ also depends on choice of $x.$ Let us make $\psi$ independent of $x.$ Removing the dependence of $T$ on $x,$ is equivalent to finding an upper bound on 
    \begin{align*}
        \frac{L_{g^{-1}}(x,r)}{l_{g^{-1}}(x,r)}
    \end{align*}
    which depends only on $n$ and $M.$ To accomplish this, we break into two cases.
    
    \noindent\textbf{Case 2a:} Suppose $L_{g^{-1}}(x,r)\neq l_{g^{-1}}(x,r).$ Set $D_1 = g(B_X(x_0,l_{g^{-1}}(x,r))),$ $D_2 = g(B_X(x_0,L_{g^{-1}}(x,r))),$ and $R = D_2\setminus \overline{D_1}.$ Let $\Gamma$ be the path family joining the boundary components of $R$ while staying in $R.$ Using \cite[Ex. 7.5]{vaisala2006lectures}, we know
    \begin{align*}
        M(g^{-1}(\Gamma)) & \leq \omega_{n-1}\left(\log\left(\frac{L_{g^{-1}}(x,r)}{l_{g^{-1}}(x,r)}\right)\right)^{1-n},
    \end{align*}
    and since $R$ is a ring domain, by Theorem \ref{VHn} 
    \begin{align*}
        0 & <\mathcal{H}_n(1)\leq M(\Gamma).
    \end{align*}
    Now, using Proposition \ref{KO-inequality},
    \begin{align*}
        \mathcal{H}_n(1) & \leq M(\Gamma)\\
                         & \leq K_O(g^{-1})M(g^{-1}(\Gamma))\\
                         & \leq M\cdot\omega_{n-1}\left(\log\left(\frac{L_{g^{-1}}(x,r)}{l_{g^{-1}}(x,r)}\right)\right)^{1-n},
    \end{align*}
    where $K_O(g^{-1})\leq M$ because $g$ is $M$-quasiconformal. Then, notice
    \begin{align*}
        \text{exp}\left(\left(\frac{M\omega_{n-1}}{\mathcal{H}_n(1)}\right)^{\frac{1}{n-1}}\right) & \geq \frac{L_{g^{-1}}(x,r)}{l_{g^{-1}}(x,r)},
    \end{align*}
    which is an upper bound dependent only on $n$ and $M.$  
    
    \noindent\textbf{Case 2b:} Now, we have to consider the case where $L_{g^{-1}}(x,r) = l_{g^{-1}}(x,r)$ separately because the ring $R$ degenerates here and $L_{g^{-1}}(x,r)/l_{g^{-1}}(x,r) = 1.$ Thus,
    \begin{align*}
        \frac{L_{g^{-1}}(x,r)}{l_{g^{-1}}(x,r)} & \leq \max\left\{ \text{exp}\left(\left(\frac{M\omega_{n-1}}{\mathcal{H}_n(1)}\right)^{\frac{1}{n-1}}\right), 1\right\},
    \end{align*}
    for all $x\in X$ and $0<r<1.$ Moreover, this upper bound will hold for any $g\in G$ chosen. Thus, by Lemma \ref{localqs},
    \begin{align*}
     \frac{L_f(x,r)}{l_f(x,r)}& \leq \frac{ L_{h}(x_0,L_{g^{-1}}(x,r))}{l_{h}(x_0,l_{g^{-1}}(x,r))}\\
     & = \frac{ L_{h}(x_0,L_{g^{-1}}(x,r))}{l_{h}(x_0,TL_{g^{-1}}(x,r))}\\
     & \leq \psi(x_0,n,K,M),
    \end{align*}
    where $\psi$ is now dependent only on $x_0,n,M$ and $K.$ We can then set
    \begin{align*}
        H & = \max\left\{\frac{C_1}{C_2}, \psi(x_0,n,K,M)\right\},
    \end{align*}
    and we have
    \begin{align*}
        \frac{L_f(x,r)}{l_f(x,r)} & \leq H,
    \end{align*}
    for all $x\in X$ and $r>0,$ as desired.
\end{proof}

Now that we have shown $f$ is a weak quasisymmetry, we show the notions of weak quasisymmetry and quasisymmetry coincide in our setting. Moreover, we manage to prove such equivalence for the more general setting of geodesic metric spaces than just quasihyperbolic metric spaces. It is well-known that for a homeomorphism $f: (X,d_X)\rightarrow (Y,d_Y),$ weak quasisymmetry and quasisymmetry coincide if $X, Y$ are doubling spaces and $X$ is also connected \cite[Theorem $10.19$]{heinonen2001lectures}. However, such equivalence is not known in general. As the proof strategy for Theorem \ref{qsequivweakqs} is similar to that of \cite[Theorem $10.19$]{heinonen2001lectures}, the result is most likely known to experts in the fields. However, as we were unable to find it in the literature, we provide it in this below for completeness.

\begin{theorem}
    \label{qsequivweakqs}
    Suppose $(X,d_X)$ and $(Y,d_Y)$ are geodesic metric spaces. Then a homeomorphism $f:X\rightarrow Y$ is weakly $H$-quasisymmetric if and only if $f$ is quasisymmetric. 
\end{theorem}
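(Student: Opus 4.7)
The plan is to handle the two implications separately. The direction ``$\eta$-quasisymmetric implies weakly $H$-quasisymmetric'' is immediate from the definitions: if $d_X(x, y) \leq d_X(x, z)$ then the $X$-ratio is at most $1$, so the $Y$-ratio is at most $\eta(1)$, giving weak $\eta(1)$-quasisymmetry. No geodesic hypothesis is needed here.

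The substantive content is the converse: weakly $H$-quasisymmetric implies $\eta$-quasisymmetric. Following the spirit of \cite[Theorem 10.19]{heinonen2001lectures}, I would construct the control function $\eta$ by chaining weak quasisymmetry along geodesics in $X$, using the fact (Remark~\ref{quasihypisquasconvex}) that geodesic spaces are $1$-quasiconvex. For a triple $(x,y,z)$ with $d_X(x,y) = s$, $d_X(x,z) = t$ and $\lambda = s/t \geq 1$, take a geodesic from $x$ to $y$ in $X$ and subdivide it into $N := \lceil \lambda \rceil$ equal pieces $x = p_0, p_1, \ldots, p_N = y$, each of length $s/N \leq t$. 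The initial step applies weak quasisymmetry with base $x$ and comparison point $z$ to bound $d_Y(f(x), f(p_1)) \leq H\, d_Y(f(x), f(z))$; for $i \geq 2$, weak quasisymmetry with base $p_{i-1}$ and comparison point $x$ (valid since $d_X(p_{i-1}, p_i) = s/N \leq (i-1)s/N = d_X(p_{i-1}, x)$), followed by the triangle inequality, yields $d_Y(f(x), f(p_i)) \leq (1 + H)\, d_Y(f(x), f(p_{i-1}))$. Iterating gives $d_Y(f(x), f(y))/d_Y(f(x), f(z)) \leq H(1+H)^{N-1}$, which bounds the $Y$-ratio by an increasing function $\eta_0(\lambda)$ for all $\lambda \geq 1$. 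For $\lambda \leq 1$ weak quasisymmetry directly gives the bound $H$.

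To complete the construction of $\eta$ as a homeomorphism $[0,\infty) \to [0,\infty)$, one must also establish that the $Y$-ratio tends to $0$ as $\lambda \to 0^+$. This is where the geodesic structure of $Y$ is needed. Given a triple with $\lambda = s/t$ small, I would take a geodesic $\sigma: [0, b] \to Y$ from $f(x)$ to $f(z)$, where $b = d_Y(f(x), f(z))$, and study the continuous function $r \mapsto d_X(x, f^{-1}(\sigma(r)))$, which starts at $0$ and ends at $t$. By the intermediate value theorem there exists $r^* \in (0, b)$ with $d_X(x, f^{-1}(\sigma(r^*))) = s$; weak quasisymmetry applied to the two points $y$ and $f^{-1}(\sigma(r^*))$, both at $X$-distance $s$ from $x$, then forces $d_Y(f(x), f(y))$ and $r^*$ to be comparable up to a factor $H$. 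Combining this with the chain bound in $X$ applied to the auxiliary triple $(x, f^{-1}(\sigma(r^*)), z)$ yields an upper bound on $r^*/b$ in terms of $\lambda$. Iterating this dual chain on successively shorter sub-geodesics of $\sigma$ should produce a genuine H\"{o}lder-type estimate of the form $d_Y(f(x), f(y))/d_Y(f(x), f(z)) \leq C \lambda^\alpha$ for $\lambda \leq 1$, for some $\alpha = \alpha(H) > 0$.

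The main obstacle is this second step: weak quasisymmetry is not manifestly symmetric in $f$ and $f^{-1}$, so there is no direct ``chain in $Y$'' argument of the kind available in $X$. One must carefully extract quasi-monotonicity of the function $r \mapsto d_X(x, f^{-1}(\sigma(r)))$ along the $Y$-geodesic, and iterate the construction on sub-geodesics to obtain geometric decay rather than a trivial constant bound as $\lambda \to 0$. Once both tail estimates are in place, a standard smoothing procedure dominates the piecewise bounds by a continuous strictly increasing $\eta: [0,\infty) \to [0, \infty)$ with $\eta(0) = 0$ and $\eta(\lambda) \to \infty$ as $\lambda \to \infty$, completing the proof of $\eta$-quasisymmetry.
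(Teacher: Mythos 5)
Your treatment of the easy direction and of the large-ratio case $\lambda\ge 1$ is correct and is essentially the paper's argument: subdivide a geodesic in $X$ from $x$ to $y$ into $\lceil\lambda\rceil$ pieces of length at most $d_X(x,z)$ and chain the weak quasisymmetry inequality; your bound $H(1+H)^{N-1}$ and the paper's $(\lambda+1)H^{\lambda+1}$ differ only in bookkeeping. The reverse implication via $H=\eta(1)$ is also as in the paper.

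The gap is in the small-ratio case, and you have flagged it yourself. Locating points on the $Y$-geodesic $\sigma$ whose preimages sit at prescribed $X$-distances from $x$ (via the intermediate value theorem) is the right start, but your plan of ``iterating on successively shorter sub-geodesics to get a H\"older bound $C\lambda^\alpha$'' is left unexecuted, and as written the argument never actually produces a bound tending to $0$ as $\lambda\to 0$. The missing device is the choice of \emph{geometrically spaced} scales together with applying weak quasisymmetry based at those intermediate points rather than at $x$. Concretely: choose $b_i\in f^{-1}(\sigma)$ with $d_X(b_i,x)=3^{-i}d_X(x,z)$ for $i=0,\dots,m-1$, where $m$ is the least integer with $3^{-m}d_X(x,z)\le d_X(x,y)$, so $m\ge \log(1/\lambda)/\log 3$. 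For $i<j\le m-1$ the reverse triangle inequality gives $d_X(b_j,x)\le d_X(b_i,b_j)$, and since $d_X(x,y)<3^{-j}d_X(x,z)$ by minimality of $m$, also $d_X(b_j,y)\le d_X(b_i,b_j)$; weak quasisymmetry based at $b_j$ then yields
\[
d_Y(f(x),f(y))\le d_Y(f(x),f(b_j))+d_Y(f(b_j),f(y))\le 2H\,d_Y(f(b_i),f(b_j)).
\]
Summing over the $m-1$ consecutive pairs and using that the points $f(b_i)$ lie on a geodesic of length $d_Y(f(x),f(z))$ gives $(m-1)\,d_Y(f(x),f(y))\le 2H\,d_Y(f(x),f(z))$, i.e.\ an image ratio of order $2H\log 3/\log(1/\lambda)$. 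This merely logarithmic decay is all one needs to assemble the homeomorphism $\eta$; in particular no H\"older estimate, no quasi-monotonicity of $r\mapsto d_X(x,f^{-1}(\sigma(r)))$, and no control of $f^{-1}$ beyond continuity is required. Without this (or an equivalent) step your proof is incomplete.
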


\begin{proof}
    Let $x,a,b\in X$ and first assume that $f:(X,d_X)\rightarrow (Y,d_Y)$ is weakly $H$-quasisymmetric. Set
    \begin{align*}
        t & = \frac{d_X(x,a)}{d_X(x,b)} \hspace{5mm}\mathrm{and}\hspace{5mm} t' = \frac{d_Y(f(x),f(a))}{d_Y(f(x),f(b))}. 
    \end{align*}
    Our goal is to show there exists a homeomorphism $\eta:[0,\infty)\rightarrow [0,\infty)$ such that $t'\leq \eta(t)$ for all $t.$ To do so, we need to consider two cases: $t\geq 1$ and $t<1.$\\ 
    \textbf{Case 1:} Let us first investigate when $t\geq 1.$ To begin, set $r = d_X(x,b)$ and
    \begin{align*}
        \varepsilon & = \frac{rt}{\ceil{t}}\leq r,
    \end{align*}
    where $\ceil{t}$ denotes the ceiling of $t.$ Since $(X,d_X)$ is a geodesic metric space, we can find a geodesic $\gamma$ with endpoints $x$ and $a.$ Moreover, we can find a sequence of $N = \ceil{t}$ points along $\gamma,$ denoted $(a_i)_{i=0}^{N-1},$ with the following three properties:
    \begin{align}
        & a_0 = x,\label{property1a}\\
        &\nonumber\\
        & d_X(a_i,a_{i+1}) = \varepsilon\leq r, \textrm{ for all } 0\leq i\leq N-2,\label{property2a}\\
        &\nonumber\\
        & \sum_{i=1}^{N-2}d_X(a_i,a_{i+1}) = d_X(x,a).\label{property3a}
    \end{align}
    By construction, $d_X(a_i,a_{i+1}) \leq d_X(a_i,a_{i-1})$ for all $0\leq i\leq N-2.$ So, using that $f$ is weakly $H$-quasisymmetric, we have
    \begin{align}
        d_Y(f(a_{i+1}),f(a_i))&\leq Hd_Y(f(a_i),f(a_{i-1})),\label{3.3.1}
    \end{align}
    for all $0\leq i\leq N-1.$ Now, if we repeatedly apply (\ref{3.3.1}) we have
    \begin{align*}
        d_Y(f(a_{i+1}),f(a_i)) & \leq Hd_Y(f(a_i),f(a_{i-1}))\\
                               & \leq H(Hd_Y(f(a_{i-1}),f(a_{i-2})))\\
                               & \vdots\\
                               & \leq H^id_Y(f(a_0),f(a_1))\\
                               & = H^id_Y(f(x),f(a_1)).
    \end{align*}
    We chose $d_X(x,a_1)\leq r = d_X(x,b),$ so again by the weak quasisymmetry of $f,$
    \begin{align}
        d_Y(f(x),f(a_1)) & \leq Hd_Y(f(x),f(b)).\label{3.3.2}
    \end{align}
    Thus, by (\ref{3.3.2}),
    \begin{align}
        d_Y(f(a_i),f(a_{i+1})) & \leq H^id_Y(f(x),f(a_1)) \leq H^{i+1}d_Y(f(x),f(b)).\label{3.3.3.}
    \end{align}
    Finally, using (\ref{property2a}), (\ref{property3a}), and (\ref{3.3.3.}) we have
    \begin{align*}
        d_Y(f(x),f(a)) & \leq \sum_{i=0}^{N-2}d_Y(f(a_i),f(a_{i+1}))\\
                       & \leq \sum_{i=0}^{N-2}H^{i+1}d_Y(f(x),f(b))\\
                       & = (H+H^2+\ldots + H^N)d_Y(f(x),f(b))\\
                       & =(H+H^2+\ldots+H^{\ceil{t}})d_Y(f(x),f(b))\\
                       & \leq (H+H^2+\ldots+H^{t+1})d_Y(f(x),f(b))\\
                       & = (t+1)H^{t+1}d_Y(f(x),f(b)),
    \end{align*}
    where the second to last line follows from $\ceil{t}\leq t+1$ and $H\geq 1.$ Rearranging, we have
    \begin{align*}
        t' & \leq (t+1)H^{t+1}.
    \end{align*}
    which is an increasing function of $t.$\\
    \textbf{Case 2:} Consider when $t<1.$ This implies $d_X(x,a)\leq d_X(x,b).$ By the weak quasisymmetry of $f,$
    \begin{align*}
        t' = \frac{d_Y(f(x),f(a))}{d_Y(f(x),f(b))} & \leq H\leq (t+1)H^{t+1}.
    \end{align*}
    Set $\varphi(t) = (t+1)H^{t+1}.$ Then, $\varphi(t)$ is a function dependent only on $H$ and $t$ such that $t'\leq \varphi(t)$ for all $t.$ Recall that our goal is to find a function of $t$ which also converges to $0$ as $t\rightarrow 0,$ and $\varphi(t)$ does not accomplish this. However, the fact that $t'\leq \varphi(t)$ for all $t$ does allow us to instead assume for this case that $t$ is small to start, say $0<t\leq \frac{1}{3}.$\\
    Since $(Y,d_Y)$ is a geodesic metric space, we can find a geodesic $\gamma'$ with endpoints $f(x)$ and $f(b).$ Moreover, we can choose a sequence of points $(b_i)_{i=0}^s$ on $f^{-1}(\gamma')$ with the following three properties:
    \begin{align}
        & b_0 = b,\label{property1b}\\
        &\nonumber\\
        & d_X(b_i,x) = 3^{-i}d_X(b,x) \textrm{ for all } 0\leq i \leq s-1,\label{property2b}\\
        &\nonumber\\
        & s\geq 2 \textrm{ is the least integer such that } 3^{-s}d_X(b,x)\leq d_X(a,x).\label{property3b}
    \end{align}
    From $(\ref{property3b}),$ we have
    \begin{align*}
        \frac{1}{3^s} & \leq t\\
        3^s & \geq \frac{1}{t}\\
        s & \geq \frac{\log(\frac{1}{t})}{\log(3)}.
    \end{align*}
    Note that $s_0(t) = \frac{\log(\frac{1}{t})}{\log(3)}\rightarrow \infty$ as $t\rightarrow 0.$ Now, let $0\leq i<j<s-1.$ By the triangle inequality, and $(\ref{property2b})$ of our sequence, notice
    \begin{align*}
        d_X(b_i,b_j) & \geq d_X(b_i,x) -d_X(b_j,x)\\
                     & = \frac{1}{3^i}d_X(x,b) - \frac{1}{3^j}d_X(x,b)\\
                     & = \frac{3^{j-i}-1}{3^j}d_X(x,b).
    \end{align*}
    Since $i<j,$ we know $\frac{1}{3^j}<\frac{3^{j-i}-1}{3^j}.$ So, by $(\ref{property2b})$ again,
    \begin{align}
        d_X(b_j,x) & = \frac{1}{3^j}d_X(x,b) \leq \frac{3^{j-i}-1}{3^j}d_X(x,b) \leq d_X(b_i,b_j),\label{3.3.4}
    \end{align}
    for all $0\leq i<j\leq s-1.$ Also, using the triangle inequality, $(\ref{property3b})$ and $(\ref{property2b})$ we have 
    \begin{align*}
        d_X(b_j,a)&\leq d_X(b_j,x)+d_X(x,a)\\
                & \leq d_X(b_j,x) + \frac{1}{3^j}d_X(b,x)\\
                & = \frac{1}{3^j}d_X(x,b) + \frac{1}{3^j}d_X(b,x)\\
                & = \frac{2}{3^j}d_X(b,x).
    \end{align*}    
    Since $i<j,$ then $j-i>1,$ so $\frac{2}{3^j}\leq \frac{3^{j-i}-1}{3^j}.$ Thus, by (\ref{3.3.4})
    \begin{align}
    d_X(b_j,a) & \leq \frac{2}{3^j}d_X(x,b) \leq \frac{3^{j-i}-1}{3^j}d_X(x,b) \leq d_X(b_i,b_j),\label{3.3.5}
    \end{align}
   for all $0\leq i<j<s-1.$ Using the triangle inequality, (\ref{3.3.4}), (\ref{3.3.5}), and that $f$ is weakly $H$-quasisymmetric, notice
   \begin{align*}
       d_Y(f(x),f(a)) & \leq d_Y(f(x),f(b_j)) + d_Y(f(b_j),f(a))\\
                      & \leq Hd_Y(f(b_i),f(b_j))+Hd_Y(f(b_i),f(b_j))\\
                      & = 2Hd_Y(f(b_i),f(b_j)),
   \end{align*}
   for all $0\leq i<j\leq s-1.$ As $d_X(b_i,x)\leq d(x,b)$ for all $i\leq s-1,$ the image of the geodesic containing $(b_i)_{i=0}^{s-1}$ is contained in $\overline{B(f(x),Hd_Y(f(x),f(b)))}.$ Recall Remark \ref{quasihypisquasconvex}, which states that $(Y,d_Y)$ being a geodesic metric space implies it is $1$-quasiconvex. Thus, 
   \begin{align*}
       \sum_{i=0}^{s-2}d_Y(f(x),f(a)) & \leq \sum_{i=0}^{s-2}2Hd_Y(f(b_i),f(b_{i+1}))
   \end{align*}
   and so
   \begin{align*}
       (s-1)d_Y(f(x),f(a)) &\leq 2H\ell(\gamma') = 2Hd_Y(f(x),f(b)).
   \end{align*}
   Rearranging, we have
   \begin{align*}
       (s-1)d_Y(f(x),f(a)) & \leq 2Hd_Y(f(x),f(b))\\
       s-1 & \leq \frac{2H}{t'}\\
       s & \leq \frac{2H}{t'}+1.
   \end{align*}
   Recall that we found $s_0(t) \leq s,$ so
   \begin{align*}
       s_0(t) \leq \frac{2H}{t'}+1,
   \end{align*}
   which implies that
   \begin{align*}
       \frac{\log\left(\frac{1}{t}\right)}{\log(3)} & \leq \frac{2H}{t'}+1\\
       t' & \leq \frac{2H}{\frac{\log\left(\frac{1}{t}\right)}{\log(3)}-1}. 
   \end{align*}
   Hence, we can set $\eta(t) = \frac{2H}{s_0(t)-1},$ and $\eta(t)\rightarrow 0$ as $t\rightarrow 0,$ as desired.\\
   Conversely, if $f$ is quasisymmetric, with homeomorphism $\eta(t),$ then we can choose $H = \eta(1)$ to see that $f$ is weakly $H$-quasisymmetric.
\end{proof}

The previous theorems completed all of the heavy lifting. We can simply string them together to prove the main result of this paper.

\begin{proof}[Proof of Theorem \ref{qcisqs}]
    By Theorem \ref{QCHweakqs}, $f$ is weakly $H$-quasisymmetric. Thus, $f$ is $\eta$-quasisymmetric by Theorem \ref{qsequivweakqs}. Conversely, suppose $f$ is $\eta$-quasisymmetric. Let $x_0\in X$ and $r>0.$ Find $y,z\in X$ such that $k_X(x_0,y) = k_X(x_0,z) = r$ and $L_f(x_0,r) = k_Y(f(x),f(y))$ and $\ell_f(x_0,r) = k_Y(f(x),f(z)).$ By assumption, $f$ is weakly $H$-quasisymmetric, where we can take $H = \eta(1)>0.$ Therefore, $k_X(x_0,y)\leq k_X(x_0,z)$ implies
    \begin{align*}
        k_Y(f(x_0),f(y)) & \leq H k_Y(f(x_0),f(z)).
    \end{align*}
    Hence,
    \begin{align}
        \frac{L_f(x_0,r)}{\ell_f(x_0,r)} & \leq H.\label{conversebound}
    \end{align}
    From (\ref{conversebound}), we deduce
    \begin{align*}
        H_f(x_0) & = \limsup_{r\rightarrow 0} \frac{L_f(x_0,r)}{\ell_f(x_0,r)} \leq H.
    \end{align*}
    As we chose $x_0$ arbitrarily,$\|H_f\|_\infty\leq H$ and so  $f$ is quasiconformal by Definition \ref{metricspacedefofqc}. We note that using the metric definition for quasiconformal maps is in fact equivalent to the analytic definition here since the Euclidean metric is locally equivalent to the quasihyperbolic metric. 
\end{proof}

We leave it open whether the results in section $3$ hold for other hyperbolic-type metrics such as Seittenranta's metric, the distance ratio metric, and Ferrand's metric. Refer to \cite{hariri2020conformally} for more on this.

\section{Function Theory on Uniformly K-QCH Domains}

 Here our objective is to uncover geometric properties of normal quasiregular mappings. Recall that Definition \ref{normalqrmapdef} generalizes the notion of a normal quasiregular map as originally described by the first author and Nicks in \cite{fletcher2024normal}. Our generalization required $X\subset S^n$ be as a uniformly quasiconformally homogeneous domain equipped with the quasihyperbolic metric. Throughout the proofs below, recall that $\sigma$ denotes the spherical distance function, and $(S^n, \sigma)$ is a metric space.

\begin{proof}[Proof of Theorem \ref{uniformconttheorem}]
    We first prove (i). Suppose $f$ is normal. By Theorem \ref{FNTheorem3.7}, and Proposition \ref{FNprop2.6} given a compact set $E\subset X,$ find a constant $L_E>0$ such that for every $x,y\in E$ and $A\in G,$ we have
    \begin{align}
        \sigma(f(A(x)),f(A(y))) & \leq L_Ek_X(x,y)^\alpha,\label{i1}
    \end{align}
    where we recall that $\alpha = M^\frac{1}{1-n}$ here.
    Next, fix $x_0\in X$ and find $r_0>0$ such that $B_X(x_0,r_0)$ is relatively compact in $X.$ Let $E$ be the compact set $\overline{B_X(x_0,r_0)}.$ Given $\varepsilon >0,$ choose
    \begin{align}
        \delta & < \min\left\{\frac{r_0}{2c},\left(\frac{r_0}{2c}\right)^{\frac{1}{\mu}}, \left(\frac{\varepsilon}{L_Ec^{\alpha}}\right)^\frac{1}{\alpha}, \left(\frac{\varepsilon}{L_Ec^{\alpha}}\right)^\frac{1}{\alpha\mu}\right\},\label{i2}
    \end{align}
    where $\mu = M^{\frac{1}{1-n}}$ and $c>0$ is the constant from Theorem \ref{GO}.
    If $x,y\in X$ with $k_X(x,y)<\delta,$ find $A\in G$ such that $A(x_0) = x.$ Set $y' = A^{-1}(y).$ Then, by Theorem \ref{GO},
    \begin{align}
        k_X(x_0,y') & = k_X(A^{-1}(x),A^{-1}(y)) \leq c\max\{k_X(x,y),k_X(x,y)^\mu\} < \frac{r_0}{2}.\label{i3}
    \end{align}
    Note that $\mu = M^\frac{1}{1-n} = \frac{1}{M^\frac{1}{n-1}}.$ Thus, $y'\in B_X(x_0,r_0)\subset E.$ Moreover, by (\ref{i1}), (\ref{i2}), and (\ref{i3}) we have the following
    \begin{align*}
        \sigma(f(x),f(y)) & = \sigma(f(A(x_0)),f(A(y')))\\
                          & \leq L_E k_X(x_0,y')^\alpha\\
                          & = L_Ek_X(A^{-1}(x),A^{-1}(y))^\alpha\\
                          & \leq L_E \left( c\max\{k_X(x,y),k_X(x,y)^\mu\}\right)^\alpha\\
                          & <\varepsilon.
    \end{align*}
    Thus, $f$ is uniformly continuous.
    
    For the converse, suppose $f$ is not normal. Hence $\mathcal{F}$ is not relatively compact in $C(X,S^n).$ By Theorem \ref{FNTheorem2.8}, it follows that $\mathcal{F}$ is not equicontinuous. This means there exists $\varepsilon>0$ and sequences $(x_m)_{m=1}^\infty,(y_m)_{m=1}^\infty\in X$ and $(A_m)_{m=1}^\infty\in G$ such that $k_X(x_m,y_m)\rightarrow 0$ as $m\rightarrow \infty$ but $\sigma(f(A_m(x_m)),f(A_m(y_m)))\geq \varepsilon.$ Again by Theorem \ref{GO} we have
    \begin{align*}
        0 & \leq k_X(A_m(x_m),A_m(y_m)) \leq c\max\{k_X(x_m,y_m), k_X(x_m,y_m)^\mu\}.
    \end{align*}
    
    Then, by the squeeze theorem, $k_X(A_m(x_m),A_m(y_m))\rightarrow 0$ as $m\rightarrow \infty.$ If we set $u_m = A_m(x_m)$ and $v_m = A_m(y_m)$ then we have $k_X(u_m,v_m)\rightarrow 0$ as $m\rightarrow \infty$ implies $\sigma(f(u_m),f(v_m))\geq \varepsilon.$ Hence, $f$ is not uniformly continuous.
    
    We now turn to (ii). If $f:X\rightarrow \mathbb{R}^n$ is normal then $\mathcal{F} = \{f(A(x))-f(A(x_0))\}$ is relatively compact in $\mathcal{Q}_{KM}(X,\mathbb{R}^n).$ We can then proceed as we did in (i) by using Theorem \ref{FNTheorem3.7}, Proposition \ref{FNprop2.6}, and Theorem \ref{GO}.
    
    For the converse, suppose $f:X\rightarrow \mathbb{R}^n$ is not normal. This implies $\mathcal{F}$ is not relatively compact in $\mathcal{Q}_{KM}(X,\mathbb{R}^n).$ Then, by Theorem \ref{FNTheorem2.8}, we can conclude that either $\mathcal{F}$ is not equicontinuous or $\mathcal{F}$ has an unbounded orbit. If $\mathcal{F}$ is not equicontinuous, then we can use the argument we used to prove the converse of (i). This follows since
    \begin{align*}
        \abs{f(A_m(x_m))-f(A_m(x_0)) -(f(A_m(y_m))-f(A_m(x_0)))} & = \abs{f(A_m(x_m))-f(A_m(y_m))}.
    \end{align*}
    
    What remains is to prove the case where $\mathcal{F}$ has an unbounded orbit. For sake of contradiction, suppose $\mathcal{F}$ has an unbounded orbit and that $f$ is uniformly continuous. Find $x_0,u\in X$ and $A_k\in G$ such that
    \begin{align}
        \abs{f(A_k(u))-f(A_k(x_0))} & \rightarrow \infty,\label{ii1}
    \end{align}
    as $k\rightarrow \infty.$ Let $\varepsilon = 1.$ Then, we can find $\delta >0$ such that $k_X(x,y)<\delta$ implies $\abs{f(x)-f(y)}<1.$ Let $T\geq 1$ such that $\delta<Tc,$ where $c$ is the constant from Theorem \ref{GO}. Then, we can find finitely many points $x_1,\ldots, x_m$ in $X$ with $x_m = u$ and $k_X(x_{i-1},x_i)<\left(\delta/Tc\right)^\frac{1}{\mu}<1$ for $i = 1,\ldots, m$ and where $\mu = M^\frac{1}{1-n}.$ Now, set $x_i^k = A_k(x_i)$ for $i = 0,\ldots, m$ and $k\in \mathbb{N}.$ Then, by Theorem \ref{GO}, notice
    \begin{align*}
        k_X(x_{i-1}^k,x_i^k) & = k_X(A(x_{i-1}),A(x_i))\\
                             & \leq c\max\{k_X(x_{i-1},x_i),k_X(x_{i-1},x_i)^\mu\}\\
                             & = ck_X(x_{i-1},x_i)^\mu\\
                             & < c\left(\frac{\delta}{Tc}\right)\\
                             & \leq \delta,
    \end{align*}
    for all $k\in \mathbb{N}.$ By the triangle inequality and uniform continuity, we have for all $k\in \mathbb{N}$ that
    \begin{align*}
        \abs{f(A_k(u))-f(A_k(x_0))} & \leq \sum_{i=1}^m\abs{f(A_k(x_i))-f(A_k(x_{i-1}))} = \sum_{i=1}^m \abs{f(x_i^k)-f(x_{i-1}^k)}<m.
    \end{align*}
    However, this contradicts (\ref{ii1}) and so we conclude that $f$ cannot be uniformly continuous, as desired.
    
\end{proof}

The next result we prove tells us that $f$ being a normal map is equivalent to the map having global H\"{o}lder behavior. This result is important, as it provides us insight into geometric properties of such normal maps.

\begin{proof}[Proof of Theorem \ref{holderSn}]
    We first assume $f$ is normal. Fix $x_0\in X$ and find $r>0$ so that $B_X(x_0,2r)$ is relatively compact in $X.$ Let $E = \overline{B_X(x_0,2r)}.$ By Theorem \ref{FNTheorem3.7} and Proposition \ref{FNprop2.6}, find $L_E>0$ such that 
    \begin{align}
        \sigma(f(A(x)),f(A(y))) & \leq L_Ek_X(x,y)^\alpha\label{4.2.2}
    \end{align}
    for all $x,y\in E$ and all $A\in G.$ Now, find $T\geq 1$ such that the following inequality is satisfied
    \begin{align*}
        r & < Tc,
    \end{align*}
    where $c>0$ is the constant from Theorem \ref{GO}. Set
    \begin{align*}
        R & = \left(\frac{r}{Tc}\right)^\frac{1}{\mu}< 1,
    \end{align*}
    where $\mu = M^\frac{1}{1-n}.$ Suppose first that $x,y\in X$ with $k_X(x,y)\geq R.$ We assumed $M,K\geq 1$ and $R<1$ so  
    \begin{align*}
        k_X(x,y)^\beta & \geq R^\beta\\
        k_X(x,y)^\beta &\geq \left( \frac{r}{Tc}\right)^\frac{\beta}{\mu}
    \end{align*}
    and
    \begin{align*}
        \pi\left( \frac{r}{Tc}\right)^{-\frac{\beta}{\mu}}k_X(x,y)^\beta & \geq \pi.
    \end{align*}
    Since spherical distances on $S^n$ are bounded above by $\pi,$ it follows that
    \begin{align*}
        \sigma(f(x),f(y)) & \leq \pi \leq \pi\left( \frac{r}{Tc}\right)^{-\frac{\beta}{\mu}}k_X(x,y)^\beta.
    \end{align*}
    Thus, (\ref{4.2.1}) holds trivially. Next, consider $x,y\in X$ with $k_X(x,y)<R.$ Find $A\in G$ such that $A(x_0) = y$ and set $x_1 = A^{-1}(x).$ Then, since $k_x(x,y)<1$ and $T\geq 1,$ notice
    \begin{align*}
        k_X(x_0,x_1) & = k_X(A^{-1}(x),A^{-1}(y))\\
                     & \leq c\max\{k_X(x,y),k_X(x,y)^\mu\}\\
                     & = ck_X(x,y)^\mu\\
                     & < cR^\mu\\
                     & = c\left(\frac{r}{Tc}\right)\\
                     & = \frac{r}{T}\\
                     & < r.
    \end{align*}
    Thus, $x_0,x_1\in E.$ Then, using (\ref{4.2.2}), Theorem \ref{GO}, and that $k_X(x,y)<1,$ we have
    \begin{align*}
        \sigma(f(x),f(y)) & = \sigma(f(A(x_1)),f(A(x_0)))\\
                          & \leq L_Ek_X(x_0,x_1)^\alpha\\
                          & = L_Ek_X(A^{-1}(x),A^{-1}(y))^\alpha\\
                          & \leq L_Ec^\alpha\max\{k_X(x,y),k_X(x,y)^\mu\}^\alpha\\
                          & = L_Ec^\alpha k_X(x,y)^{\mu\alpha}
    \end{align*}
    Note that $\beta = \alpha\mu.$ Then, setting $C_0 = L_Ec^\alpha$ we see that $f$ is globally $\beta-$H\"{o}lder.\\
    Conversely, suppose (\ref{4.2.1}) holds for all $x,y\in X.$ Let $x_0\in X, r>0$ and $x,y\in B_X(x_0,r).$ If $A\in G,$ set $u = A(x)$ and $v = A(y).$ Then, using (\ref{4.2.2}) and Theorem \ref{GO},
    \begin{align*}
        \sigma(f(A(x)),f(A(y))) & = \sigma(f(u),f(v))\\
                                & \leq C_0k_X(u,v)^\beta\\
                                & = C_0k_X(A(x),A(y))^\beta\\
                                & \leq C_0 c^\beta \max\{k_X(x,y),k_X(x,y)^\mu\}^\beta.
    \end{align*}
    Thus, if $\omega(t) = \max\{t,t^\mu\}^\beta,$ then $\mathcal{F} = \{f\circ A:A\in G\}$ is locally uniformly $\omega-$continuous. Now, for any $x'\in X,$ $\overline{\mathcal{F}(x')}\subset S^n$ is a closed set. Since $S^n$ is compact, $\overline{\mathcal{F}(x')}$ is compact. Thus, $\mathcal{F}(x')$ is relatively compact in $S^n.$ Hence, by Theorem \ref{FNTheorem2.9}, $\mathcal{F}$ is normal, and so $f$ is normal.
\end{proof}

Next, we find a result similar to the previous theorem, but for when our range is $\mathbb{R}^n$ rather than $S^n.$ Changing to $\mathbb{R}^n$ equipped with Euclidean metric tells us that $f$ normal is equivalent to saying $f$ has globally Lipschitz behavior, but locally H\"{o}lder behavior. Although we still gain geometric insight, it is noticeably different than the previous theorem.

\begin{proof}[Proof of Theorem \ref{holderRn}]
    We first assume $f$ is normal. Fix $x_0\in X$ and find $r>0$ so that $B_X(x_0,2r)$ is relatively compact in $X.$ Let $E = \overline{B_X(x_0,r)}.$ By Corollary \ref{FNCorollary3.10}, find $L_E>0$ such that 
    \begin{align}
        \abs{f(A(x'))-f(A(y')))} & \leq L_Ek_X(x',y')^\alpha\label{4.5.1}
    \end{align}
    for all $x',y'\in E$ and all $A\in G.$ Now, find $T\geq 1$ such that
    \begin{align*}
        r & < Tc,
    \end{align*}
    where $c>0$ is the constant from Theorem \ref{GO}. Set
    \begin{align*}
        R & = \left(\frac{r}{Tc}\right)^\frac{1}{\mu}< 1,
    \end{align*}
    where $\mu = M^\frac{1}{1-n}.$ Now, suppose $x,y\in X$ with $k_X(x,y)\leq R.$ Find $A\in G$ such that $x_0 = A(x)$ and $y' = A(y).$ Then, using Theorem \ref{GO} and that $k_X(x,y)<1,$ notice
    \begin{align*}
        k_X(x_0,y') & = k_X(A(x),A(y))\\
                    & \leq c\max\{k_X(x,y),k_X(x,y)^\mu\}\\
                    & = ck_X(x,y)^\mu\\
                    & < cR^\mu\\
                    & = \frac{r}{T}\\
                    & < r.
    \end{align*}
    Thus, $y'\in E.$ Then, using (\ref{4.5.1}), Theorem \ref{GO}, and that $k_X(x,y)<1,$ we have
    \begin{align*}
        \abs{f(x)-f(y)} & = \abs{f(A^{-1}(x_0))-f(A^{-1}(y'))}\\
                        & \leq L_Ek_X(x_0,y')^\alpha\\
                        & = L_E k_X(A(x),A(y))^\alpha\\
                        & \leq L_Ec^\alpha\max\{k_X(x,y),k_X(x,y)^\mu\}^\alpha\\
                        & = L_Ec^\alpha k_X(x,y)^{\alpha\mu}\\
                        & = L_Ec^\alpha k_X(x,y)^\beta.
    \end{align*}
    Now, suppose $x,y\in X$ with $k_X(x,y)>R.$ Further, suppose $m\in \mathbb{N}$ such that
    \begin{align}
        k_X(x,y) \in (mR/2,(m+1)R/2].\label{4.5.2}
    \end{align}
    Since $k_X(x,y) = \inf_\gamma \int_\gamma \frac{1}{d(z,\partial X)}\abs{dz},$ we can find a sequence of points $z_0,z_1,\ldots, z_m$ with $z_0 = x,z_m = y, k_X(z_i,z_{i+1}) = R/2$ for $i = 0,\ldots, m-2$ and $k_X(z_{m-1},z_m)<R.$ Then, from the first case we discussed and (\ref{4.5.2}), we have
    \begin{align*}
        \abs{f(x)-f(y)} & \leq \sum_{i=0}^{m-1}\abs{f(z_i)-f(z_{i+1})}\\
                        & \leq \sum_{i=0}^{m-1}L_Ec^\alpha k_X(z_i,z_{i+1})^{\beta}\\
                        & <L_Ec^\alpha(m-1)\frac{R^\beta}{2^\beta} + L_Ec^\alpha R^\beta\\
                        & < L_Ec^\alpha R^\beta m\\
                        & \leq 2L_Ec^\alpha R^{\beta-1}k_X(x,y).
    \end{align*}
    It then follows that (\ref{4.5}) holds globally with $C_0 = 2L_Ec^\alpha R^{\beta-1}.$
    
    Conversely, suppose that (\ref{4.5}) holds for all $x,y\in X.$ Let $x_1\in X,$ choose $0<r<1/2,$ and suppose $x,y\in B_X(x_1,r).$ If $A\in G$ then by Theorem \ref{GO},
    \begin{align*}
        \abs{f(A(x)),f(A(y))} & \leq C_0 k_X(A(x),A(y))^{\mu\alpha}\\
                              & \leq C_0c^{\beta}\max\{k_X(x,y),k_X(x,y)^\mu\}^{\alpha\mu}\\
                              & = C_0c^\beta k_X(x,y)^{\mu^2\alpha}.
    \end{align*}
    
    Thus, for $x_0\in X,\mathcal{F} = \{f(A(x))-f(A(x_0))|A\in G\}$ is locally uniformly $\omega$-continuous with $\omega(t) = t^{\mu^2\alpha}.$ Furthermore, notice that $\mathcal{F}(x_0) = \{0\}$ which is relatively compact in $\mathbb{R}^n.$ Thus, $\mathcal{F}$ is relatively compact in $C(X,\mathbb{R}^n)$ by Theorem \ref{FNTheorem2.9}. Hence, $\mathcal{F}$ is normal and so $f$ is normal.
\end{proof}

\bibliographystyle{plain}  

\vspace{.5 in}
\noindent Department of Mathematical Sciences, Northern Illinois University, DeKalb, IL 60115-2888,USA,\\
\textit{Email Address:} \texttt{fletcher@math.niu.edu}
\vspace{.1 in}

\noindent Department of Mathematical Sciences, Northern Illinois University, DeKalb, IL 60115-2888,USA,\\
\textit{Email Address:} \texttt{ahahn2813@gmail.com}

\end{document}